\newcommand{\Addresses}{{
  \bigskip
  \footnotesize

  \textsc{Department of Mathematics, Saarland University, P.O. Box 151150,  Saar- br{\"u}cken 66041, Germany} and
  \textsc{ Faculty of Mathematics and Mechanics, St. Petersburg State University, Universitetskii pr. 28,  St. Petersburg 198504, Russia}\par\nopagebreak
  \textit{E-mail address:} \texttt{darya@math.uni-sb.de}

  \bigskip

\textsc{St. Petersburg Department of Steklov Institute, Fontanka 27, St. Petersburg 191023, Russia} and
  \textsc{Faculty of Mathematics and Mechanics, St. Petersburg State University, Universitetskii pr. 28,  St. Petersburg 198504, Russia}\par\nopagebreak
  \textit{E-mail address:} \texttt{al.il.nazarov@gmail.com}
}}
\newtheorem{theorem}{Theorem}
\theoremstyle{definition}
\newtheorem{definition}[theorem]{Definition}
\theoremstyle{definition}
\newtheorem{remark}{Remark}
\theoremstyle{plain}
\newtheorem{thm}{Theorem}[section]
\newtheorem{lemma}[thm]{Lemma}
\author{D.\,E. Apushkinskaya, A.\,I. Nazarov}
\title{A counterexample to the Hopf-Oleinik lemma \\
(elliptic case) \thanks{
{\it AMS Subject Classification:}
35J15, 35B45
\newline
{\it Key words:}
elliptic equations, Hopf-Oleinik lemma, Dini continuity, counterexample
}}
\begin{document}
\maketitle

\begin{abstract}
We construct a new counterexample to the Hopf-Oleinik boundary point lemma. It shows that for convex domains $C^{1,\, \text{Dini}}$ assumption on $\partial\Omega$ is the necessary and sufficient condition providing the Hopf-Oleinik type estimates.
\end{abstract}
\vspace{0.3cm}

 \begin{flushright}
\textit{Dedicated to Professor M.V. Safonov}
\end{flushright}
\bibliographystyle{amsalpha}
\newcommand{\tg}{\textup{tg}}
\newcommand{\ep}{\varepsilon}
\newcommand{\osc}[1]{\underset{#1}{\textup{osc}}}

\section{Introduction}
The influence of the properties  of a domain to the behavior of a solution is one of the  most important topic  in the qualitative analysis of partial differential equations. 

The significant result in this field is the Hopf-Oleinik lemma, known also as the "Boundary Point Principle". This celebrated lemma states: \vspace{0.1cm}

\noindent
\textit{Let $u$ be a nonconstant solution to a second-order homogeneous uniformly elliptic nondivergence equation with bounded measurable coefficients, and let $u$ attend its extremum at a point $x^0$ located on the boundary of a domain $\Omega \subset \mathbb{R}^n$. Then $\frac{\partial u}{\partial \mathbf{n}}(x^0)$ is necessarily nonzero provided that  $\partial\Omega$ satisfies the proper assumptions at $x^0$}. 

This result was established in a pioneering paper of S.~Zaremba \cite{Z10} for  the Laplace equation in a 3-dimensional domain $\Omega$ having interior touching ball at $x^0$
and generalized by G.~Giraud   \cite{G32}-\cite{G33} to equations with H{\"o}lder continuous leading coefficients and continuous lower order coefficients 
in domains $\Omega$ belonging to the class $C^{1, \alpha}$ with $\alpha \in (0,1)$. 

Notice that a related assertion about the negativity on $\partial\Omega$ of the normal derivative of the Green's function corresponding to the Dirichlet problem for the Laplace operator was proved much earlier for 2-dimensional smooth domains  by C.~Neumann in \cite{Neu88} (see also \cite{Ko01}). The result of \cite{Neu88} was extended for operators with the lower order coefficients by L.~Lichtenstein \cite{Lich24}. 
The same version of the Boundary Point Principle for the Laplacian and 3-dimensional domains satisfying a more flexible interior paraboloid condition was obtained by M.V.~Keldysch and M.A.~Lavrentiev in \cite{KeLa37}.

A crucial step in studying the Boundary Point Principle was made by E.~Hopf \cite{H52} and O.A.~Oleinik \cite{O52}, who simultaneously and independently proved the statement for the general elliptic equations with bounded coefficients and domains satisfying an interior ball condition at $x^0$. 

Later the efforts of many mathematicians were focused on  generalization of the Boundary Point Principle in several directions (for the details we refer the reader to \cite{AZ11} and \cite{Alv11} and references therein). Among these directions are the extension of the class of operators and the class of solutions, as well as the weakening of assumptions on the boundary.

The widening of the class of operators to singular/degenerate ones was made in the papers \cite{KaHim75}, \cite{KaHim77} and  \cite{AZ11}, while the uniform elliptic operators with unbounded lower order coefficients were studied in \cite{S10} and \cite{Naz12} (see also \cite{NU09}). We mention also the publications \cite{T83} and \cite{MSh15} where the Boundary Point Principle was established for a class of degenerate quasilinear operators including the $p$-Laplacian.

We note that before 2010 all the results were formulated for classical solutions, i.e. $u\in C^2\left( \Omega\right) $. The class of solutions was expanded  in \cite{S10} to  strong generalized solutions with Sobolev's second order derivatives. The latter requirement seems to be natural in studying of nondivergence elliptic equations. 

The reduction of the  assumptions on the boundary of $\Omega$ up to $C^{1, \text{Dini}}$-regularity was realized for various elliptic operators in the papers \cite{W67}, \cite{Him70} and \cite{Lie85}  (see also \cite{S08}). A weakened form of the Hopf-Oleinik lemma (the existence of a boundary point $x^1$ in any neighborhood of $x^0$ and a direction $\ell$ such that $\frac{\partial u}{\partial \ell}(x^1) \neq 0$)  was proved in \cite{N83} for a much wider class of domains including all Lipschitz ones. We mention also the paper \cite{Sw97} where the  behavior  of superharmonic functions near the boundary of  2-dimensional domains with corners is described  in terms of the main eigenfunction of the Dirichlet Laplacian.

The sharpness of some requirements was confirmed by corresponding counterexamples constructed in \cite{W67}, \cite{Him70}, \cite{KaHim75}, \cite{S08}, \cite{AZ11} and \cite{Naz12}. In particular, the counterexamples from \cite{W67}, \cite{Him70} and \cite{S08} show that the Hopf-Oleinik result fails for domains lying entirely in non-Dini paraboloids. 

The main result of our paper is a new counterexample showing the sharpness of the Dini condition for the boundary of $\Omega$. The simplest version of this counterexample can be formulated as follows: \vspace{0.1cm}

\noindent
\textit{Let $\Omega$ be a convex domain in $\mathbb{R}^n$, let $\partial \Omega$ in a neighborhood of the origin be described by the equation $x_n=F\left( x'\right)$ with $F \geqslant 0$ and
$F(0)=0$,
and let $u \in W^2_{n,\, loc}\left( \Omega\right) \cap C\left( \overline{\Omega}\right) $ be a solution of the uniformly elliptic equation}
$$
-a^{ij}(x)D_iD_ju=0 \quad \textit{in}\quad \Omega.
$$
\textit{Suppose also that $u\big|_{\partial\Omega}$  vanishes at a neighborhood of the origin.} 
\textit{If, in addition, the function $\delta (r)=\sup\limits_{|x'|\leqslant r}\frac{F(x')}{|x'|}$ is not Dini continuous at zero, then $\frac{\partial u}{\partial \mathbf{n}}(0)=0$. } \vspace{0.1cm}

Thus, it turns out that for convex domains the Dini continuity assumption on $\delta (r)$ is necessary and sufficient for the validity of the Boundary Point Principle. We emphasize that in our counterexample the Dini condition fails for  supremum of $F(x') /|x'|$, while in all the previous results of this kind it fails for infimum of $F(x') /|x'|$. In other words, we show that the violating of the Dini condition just in one direction  causes  the lack of the Hopf-Oleinik lemma.






\subsection{Notation and Conventions}

Throughout the paper we use the following notation:

\noindent$x=(x',x_n)=(x_1,\dots,x_{n-1},x_n)$ is a point in
${\mathbb R}^n$;

\noindent
$
\mathbb{R}^n_+=\left\lbrace x\in \mathbb{R}^n : x_n >0\right\rbrace; 
$

\noindent $|x|, |x'|$ are the Euclidean norms in the corresponding
spaces;

\noindent
$\chi_{\mathcal{E}}$ denotes the characteristic function of the set $\mathcal{E} \subset \mathbb{R}^n$;



\noindent $\Omega$ is a bounded domain in ${\mathbb R}^n$
with boundary $\partial\Omega$;






\noindent $\mathcal{P}_{r,h}(\overline{x}')=\left\lbrace x\in \mathbb{R}^n: |x'-\overline{x}'|<r, 0<x_n<h\right\rbrace $; 
\quad $\mathcal{P}_{r}(\overline{x}')=\mathcal{P}_{r,r}(\overline{x}')$;

\noindent $\mathcal{P}_{r,h}=\mathcal{P}_{r,h}(0)$; \quad $\mathcal{P}_r=\mathcal{P}_r(0)$;



\noindent $B_r(x^0)$ is the open ball in $\mathbb{R}^n$ with center
$x^0$ and radius $r$; \quad $B_r=B_r(0)$;



\medskip

\noindent For $r_1<r_2$ we define the annulus
$\mathcal{B}(x^0,r_1,r_2)=B_{r_2}(x^0)\setminus \overline{B_{r_1}(x^0)}$.



\noindent $v_+=\max\left\lbrace v,0\right\rbrace$ ,\quad $v_-=\max\left\lbrace -v,0\right\rbrace $.

\noindent $\|\cdot\|_{\infty, \Omega}$ denotes the norm in $L_{\infty}\left( \Omega \right)$. \vspace{0.2cm} 

\noindent We adopt the convention that the indices $i$ and $j$ run
from $1$ to $n$. We also adopt the convention regarding summation
with respect to repeated indices.

\medskip
\noindent $D_i$ denotes the operator of (weak) differentiation with respect
to  $x_i$;

\noindent
$D=\left( D',D_n\right) =\left( D_1, \dots,D_{n-1},D_n\right) $.


\medskip
\noindent ${\cal L}$ is a linear uniformly elliptic operator with
measurable coefficients:
\begin{equation} \label{numer}
{\cal L}u\equiv -a^{ij}(x)D_iD_ju+b^i(x)D_iu,\quad\qquad
\nu {\cal I}_n\le (a^{ij}(x))\le\nu^{-1}{\cal I}_n,
\end{equation} 
where ${\cal
I}_n$ is identity $(n\times n)$-matrix. We denote $\mathbf{b}(x)=\left( b^1(x), \dots, b^n(x)\right) $.


\medskip
\noindent We use letters $C$ and $N$ (with or without indices) to
denote various constants. To indicate that, say, $C$ depends on some
parameters, we list them in the parentheses: $C(\dots)$.

\begin{definition}
We say that a function $\sigma : [0,1]\rightarrow \mathbb{R}_+$ belongs to the class~$\mathcal{D}_1$~if 
\begin{itemize}
\item $\sigma$ is increasing, and $\sigma (0)=0$, and $\sigma (1)=1$;
\item $\sigma (t)/t$ is summable and decreasing.
\end{itemize}
\end{definition} \vspace{0.1cm}

\begin{remark}
Our assumption about decay of $\sigma (t)/t$  is not restrictive. Indeed, for any increasing function $\sigma :[0,1]\rightarrow \mathbb{R}_+$ satisfying $\sigma (0)=0$ and $\sigma (1)=1$ and having summable $\sigma (t)/t$, we can define
$$
\widetilde{\sigma} (t)=t \sup\limits_{\tau \in [t,1]}\frac{\sigma (\tau)}{\tau}, \qquad t\in (0,1).
$$
It is easy to see that $\widetilde{\sigma} \in \mathcal{D}_1$, $\widetilde{\sigma}(t)/t$ decreases and
$
\sigma (t) \leqslant \widetilde{\sigma} (t), \quad \forall t\in (0,1].
$ 
\end{remark} \vspace{0.1cm}

\begin{definition}
Let a function $\sigma$ belong to the class ~$\mathcal{D}_1$. We define the function $\mathcal{J}_{\sigma}$ as follows
\begin{equation} \label{J-sigma}
\mathcal{J}_{\sigma}(s):=\int\limits_{0}^{s}\frac{\sigma (\tau)}{\tau}d \tau.
\end{equation}
\end{definition}

\begin{remark} \label{zamechanie}
Decreasing of  $\sigma (t) /t$ implies
\begin{equation} \label{relation-1}
\sigma (t) \leqslant \mathcal{J}_{\sigma} (t) \qquad \forall t\in [0,1].
\end{equation}
In addition, for $t \leqslant t_0\leqslant 1$ we have
\begin{equation} \label{relation-sigma}
\sigma \left( t / t_0\right) =\frac{\sigma \left( t / t_0\right) }{t /t_0} \cdot t /t_0 \leqslant \frac{\sigma (t)}{t} \cdot t/t_0= \frac{\sigma (t)}{t_0},
\end{equation}
and, similarly,
\begin{equation} \label{relation-J-sigma}
\mathcal{J}_{\sigma}\left( t / t_0\right) \leqslant \frac{\mathcal{J}_{\sigma}(t)}{t_0}.
\end{equation} 
\end{remark} \vspace{0.1cm}

\begin{definition}
We say that a function $\zeta $ satisfies the Dini condition at zero if
$$
|\zeta (r)| \leqslant C \sigma (r),
$$
and $\sigma$ belongs to the class $\mathcal{D}_1$.
\end{definition}


\section{Preliminaries}
\subsection{Properties of $\Omega$}
Let $\Omega$ be a bounded domain in $\mathbb{R}^n$. Without loss of generality we may assume   $0 \in \partial\Omega$.

Suppose that $\Omega$ is locally convex in a neighborhood of the origin. Without restriction the latter means that for some $0<\mathcal{R}_0 \leqslant 1$ we have 
$$
\mathcal{P}_{\mathcal{R}_0}\cap \Omega =\left\lbrace (x',x_n) \in \mathbb{R}^n: |x'|\leqslant \mathcal{R}_0,  F(x')< x_n <\mathcal{R}_0\right\rbrace ,
$$
where $F$ is a convex nonnegative function satisfying $F(0)=0$.

For $r \in (0,\mathcal{R}_0)$ we define the functions $\delta=\delta (r)$ and $\delta_1~=~\delta_1(r)$ by the formulas
\begin{equation}
\delta (r):=\max\limits_{|x'|\leqslant r}\frac{F(x')}{|x'|}, \qquad \delta_1(r):=\max\limits_{|x'|\leqslant r} |\nabla F(x')|.
\label{delta-definition}
\end{equation}

\begin{lemma}\label{Proposition}
The following statements hold:
\begin{itemize}
\item[(a)] $\delta_1(r) \to 0$\ as\ $r \to 0$\ iff $\delta (r) \to 0$ as $r\to 0$.
\item[(b)]  $\delta_1(r)$ satisfies the Dini condition at zero iff $\delta (r)$ satisfies the Dini condition at zero.
\end{itemize}
\end{lemma}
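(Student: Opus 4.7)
The plan is to establish the two-sided sandwich
\[
\delta(r)\le \delta_1(r)\le 2\,\delta(2r)\qquad\text{for all sufficiently small } r>0,
\]
from which both (a) and (b) follow at once.

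First I would record the consequence of convexity that $F(x')/|x'|$ is nondecreasing along rays from the origin: since $F$ is convex and $F(0)=0$, for $t\in(0,1]$ one has $F(tx')\le tF(x')$, hence $F(tx')/|tx'|\le F(x')/|x'|$. This shows the maximum in the definition of $\delta(r)$ is attained on the sphere $|x'|=r$ and that $\delta$ is nondecreasing.

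Next I would prove the left inequality $\delta(r)\le \delta_1(r)$. At any point $x'$ where $F$ is differentiable, convexity gives $F(0)\ge F(x')+\nabla F(x')\cdot(0-x')$, whence (using $F(0)=0$) $\nabla F(x')\cdot x'\ge F(x')$, and Cauchy--Schwarz yields $|\nabla F(x')|\ge F(x')/|x'|$. Taking the maximum over $|x'|\le r$ proves the bound (where $|\nabla F|$ is interpreted a.e., which is harmless since convex functions are differentiable a.e.\ and the subdifferential is single-valued at such points).

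For the right inequality I would fix $x'$ with $|x'|\le r\le \mathcal R_0/2$ at which $F$ is differentiable with $\nabla F(x')\ne 0$, set $e=\nabla F(x')/|\nabla F(x')|$ and $h=r$, and use the supporting-hyperplane inequality in the direction $e$:
\[
F(x'+he)\ge F(x')+h|\nabla F(x')|\ge h|\nabla F(x')|.
\]
Since $|x'+he|\le 2r\le \mathcal R_0$, the right-hand side is controlled by $\delta(2r)\cdot|x'+he|$, giving
\[
|\nabla F(x')|\le \frac{F(x'+he)}{h}\le \delta(2r)\,\frac{|x'+he|}{r}\le 2\,\delta(2r).
\]
Maximizing the left-hand side gives $\delta_1(r)\le 2\delta(2r)$.

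Part (a) then follows immediately. For part (b), the left inequality of the sandwich gives one direction for free, and for the other direction, if $\delta$ satisfies the Dini condition with a majorant $\sigma\in\mathcal D_1$ then $\delta_1(r)\le 2C\sigma(2r)$, and the rescaled function $\widetilde\sigma(r)=\sigma(2r)$ (suitably renormalized and extended) lies in $\mathcal D_1$ because $\sigma(2r)/r$ is a constant times $\sigma(2r)/(2r)$ and hence decreasing. Equivalently, a change of variable $s=2r$ in $\int_0^{r_0}\delta_1(r)/r\,dr$ reduces it to $\int_0^{2r_0}\delta(s)/s\,ds<\infty$. I expect the only mild obstacle to be bookkeeping about the a.e.\ definition of $\nabla F$ and a careful choice of normalization in Remark~1 to stay inside the class $\mathcal D_1$; the geometric core is the simple two-line chain above.
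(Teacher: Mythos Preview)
Your proof is correct and follows essentially the same approach as the paper: both establish the sandwich $\delta(r)\le\delta_1(r)\le 2\delta(2r)$ via the supporting-hyperplane inequality (first with $z'=0$, then with $z'=x'+r\,\nabla F(x')/|\nabla F(x')|$), and deduce (a) and (b) directly. The only differences are cosmetic---your added remark on monotonicity of $\delta$ along rays and the extra bookkeeping about $\mathcal D_1$ are not needed for the argument, and the paper simply notes that the two Dini integrals converge simultaneously.
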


\begin{proof}
By convexity of $F$, we have for any $x'$ and $z'$ the estimate
\begin{equation}
F(z')\geqslant F(x')+\nabla F(x')\cdot (z'-x').
\label{**}
\end{equation}
Therefore,
$$
|\nabla F(x')|\geqslant \nabla F(x') \cdot \frac{x'}{|x'|}\geqslant \frac{F(x')}{|x'|},
$$
and, consequently,
\begin{equation}
\delta_1(r) \geqslant \delta (r).
\label{***}
\end{equation}

On the other hand, for any $r<\mathcal{R}_0/2$ we can find a point $x'_{*}$ such that
$$
|\nabla F(x'_{*})|=\delta_1(r).
$$
Chosing 
$
z'=x'_{*}+r\dfrac{\nabla F(x'_{*})}{|\nabla F(x'_{*})|},
$
we easily deduce from (\ref{**}) the inequalities 
$$
 |z'| \leqslant 2r \quad \text{and} \quad F(z')\geqslant r\delta_1(r),
$$
which provide
\begin{equation}
\delta (2r)\geqslant \delta (|z'|)\geqslant \dfrac{\delta_1(r)}{2}.
\label{****}
\end{equation} 

Combining (\ref{***}) and (\ref{****}) we conclude that statement {\it{(a)}} is obvious and the integrals
$$
\int\limits_0^{\mathcal{R}_0}\frac{\delta(r)}{r}dr \quad \text{and}\quad \int\limits_{0}^{\mathcal{R}_0}\frac{\delta_1(r)}{r}dr
$$
converge simultaneously.
\end{proof}

If $\delta (r)$ does not converge to zero as $r \rightarrow 0$, we can easily see that  the domain $\Omega$ is contained in a dihedral wedge with the angle less than $\pi$ and the edge going through the origin. For this case the statement of  \textbf{Main Theorem} is proved already in \cite[Theorem~4.3]{AN01}. By this reason we will assume throughout  this paper that
\begin{equation} \label{delta-to-zero}
\delta (r)\rightarrow 0 \quad \text{as}\quad r\rightarrow 0.
\end{equation}

In view of (\ref{delta-to-zero}), it is evident that $\delta$ and $\delta_1$ are moduli of continuity at the origin of the functions $F(x')/|x'|$ and $|\nabla F(x')|$, respectively. \vspace{0.2cm}



\subsection{Properties of ${\cal X} \left( \Omega\right) $}
 Let ${\cal X}(\Omega)$ be a function space with the norm $\|\cdot\|_{{\cal{X}},\Omega}$. For $\Omega_1 \subset \Omega$ we will assume that
$$
\|f\|_{\mathcal{X}, \Omega_1}=\|f \cdot \chi_{\Omega_1}\|_{\mathcal{X}, \Omega}.
$$

\vspace{0.2cm}

We suppose that ${\cal X}(\Omega)$ has the following properties: 

\begin{itemize}
\item[(i)] 
\textit{For arbitrary measurable function} $g$ \textit{defined in} $\Omega$
\textit{and any function}
 
$f \in \mathcal{X}\left( \Omega\right)$
\textit{ the inequality}  
$|g(x)| \leqslant |f(x)|$ \textit{implies} $g \in \mathcal{X}\left( \Omega\right)$ \textit{and} 

$\|g\|_{\mathcal{X}, \Omega} \leqslant \|f\|_{\mathcal{X}, \Omega}$;
\item[(ii)] 
\textit{For}  $f_k \in \mathcal{X}\left( \Omega\right) \ \textit{the convergence}\ f_k \searrow 0$ \textit{a.e.\, in} $\Omega$ \textit{implies}
 
$\|f_k\|_{\mathcal{X}, \Omega}\rightarrow 0$.
\end{itemize}

Using the terminology of classic monograph of Kantorovich and Akilov \cite{KA82} we may say that $\mathcal{X}\left( \Omega\right) $ is the ideal functional space with order continuous monotone norm (see \cite[\S3, Chapter IV, Part I]{KA82} for more details). 

\vspace{1ex}
We will also assume that 
\begin{itemize}
\item[(iii)]  $\mathcal{X}_{loc}\left( \Omega\right) \ \textit{contains the Orlicz space}\ L_{\Phi, loc}\left( \Omega\right) \ \textit{with}\ \Phi(\varsigma)=e^{\varsigma}-\varsigma-1.$
\end{itemize}

\vspace{0.2cm}
Finally, the basic assumption about $\mathcal{X} \left( \Omega\right)$ is the Aleksandrov-type maximum principle.
Namely, we denote by $\mathcal{W}^2_{\mathcal{X}, loc}\left( \Omega\right) $ the set of the functions $u$ satisfying $D\left( Du\right) \in \mathcal{X}_{loc}\left( \Omega \right) $, and suppose that if $u\in \mathcal{W}^{2}_{\mathcal{X}, loc}\left( \Omega \right) \cap \mathcal{C}\left( \overline{\Omega}\right)  $,
$u|_{\partial\Omega}\le0$, and $|\textbf{b}|\in \mathcal{X}\left( \Omega\right)$  then 
\begin{equation}
u\leqslant N_0(n,\nu, \|\textbf{b}\|_{\mathcal{X}, \Omega}
)\cdot \textup{diam}(\Omega)
\cdot\|({\cal L}u)_+\|_{{\cal X}, \left\lbrace u>0\right\rbrace }.
\label{max-principle}
\end{equation} 

\begin{remark}
It is well known from \cite{Al60},  \cite{B61} and \cite{Al63} (see also survey \cite{Naz07} for further references) that $L_n(\Omega)$ has  property  (\ref{max-principle}). It is also evident that properties (i)-(iii) are satisfied in $L_n\left( \Omega\right) $. Therefore, $L_n(\Omega)$ can be treated as a "basic" example of ${\cal X}(\Omega)$. As other examples of the space ${\cal{X}}(\Omega)$ we mention some Lebesgue weighted spaces with power weights (see \cite{Naz01}).
\end{remark}

\begin{remark}
Unlike the natural properties (i)-(ii),  assumption (iii) is rather "technical" one. Without (iii), our arguments from the proof of Step~3 in Theorem~\ref{osc-estimate} are not applicable to the approximating operator $\mathcal{L}_{\varepsilon}$. So, we can not withdraw (iii) in abstract setting. However, in all known examples of $\mathcal{X}\left( \Omega \right)$  the property (iii) is satisfied.
\end{remark}

\begin{remark} \label{X-equal-Ln}
Some of the statements, that will be referred to in the sequel, were proved earlier just for the case ${\cal X}(\Omega)=L_n(\Omega)$. However, if all the arguments are based only on the Aleksandrov-type maximum principle, these statements remain valid for an arbitrary considered space ${\cal X}(\Omega)$. In such cases, we will refer without any further explanation.
\end{remark} 

\vspace{0.2cm}

We also need the following convergence lemmas.

\begin{lemma}\label{analog-Lebesgue-theorem}
Let $\left\lbrace f_j \right\rbrace $ be a sequence of measurable functions on $\Omega$, and let $f \in \mathcal{X}\left( \Omega \right) $. Suppose also that $f_j\rightarrow 0$  in measure on $\Omega$, and $|f_j(x)| \leqslant |f(x)|$.

Then 
\begin{equation} \label{analog-Lebesgue}
\|f_j\|_{\mathcal{X}, \Omega}\rightarrow 0 \quad \textit{as} \quad j \rightarrow \infty.
\end{equation}
\end{lemma}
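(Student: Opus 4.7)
The plan is to reduce the "convergence in measure" hypothesis to the "monotone decrease a.e." hypothesis of property (ii), using the standard subsequence trick combined with the ideal-norm monotonicity of property (i).

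First, I would argue by contradiction: assume $\|f_j\|_{\mathcal{X}, \Omega} \not\to 0$. Then there is an $\varepsilon > 0$ and a subsequence, which I relabel $\{f_{j_k}\}$, with $\|f_{j_k}\|_{\mathcal{X}, \Omega} \geqslant \varepsilon$ for all $k$. Since $f_j \to 0$ in measure, so does every subsequence; by the classical Riesz theorem I can extract a further subsequence (again relabeled $\{f_{j_k}\}$) which converges to $0$ almost everywhere in $\Omega$.

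Next I would construct a dominating monotone sequence. Set
\begin{equation*}
g_k(x) := \sup_{m \geqslant k}\, |f_{j_m}(x)|, \qquad x \in \Omega.
\end{equation*}
By construction $g_k(x) \leqslant |f(x)|$ everywhere, and by property (i) this gives $g_k \in \mathcal{X}(\Omega)$ with $\|g_k\|_{\mathcal{X},\Omega} \leqslant \|f\|_{\mathcal{X},\Omega}$. Moreover the sequence $\{g_k\}$ is nonincreasing in $k$, and because $f_{j_k} \to 0$ a.e., one has $g_k \searrow 0$ a.e.\ on $\Omega$. Property (ii) then yields $\|g_k\|_{\mathcal{X},\Omega}\to 0$.

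Finally, since $|f_{j_k}(x)| \leqslant g_k(x)$ pointwise, property (i) gives $\|f_{j_k}\|_{\mathcal{X},\Omega} \leqslant \|g_k\|_{\mathcal{X},\Omega} \to 0$, which contradicts $\|f_{j_k}\|_{\mathcal{X},\Omega} \geqslant \varepsilon$. This contradiction establishes \eqref{analog-Lebesgue}. The only place that requires any care is the passage from convergence in measure to a.e.-convergence on a subsequence (Riesz' theorem); the rest is a direct application of the ideal-norm axioms (i) and (ii), so I do not expect a genuine obstacle in this argument.
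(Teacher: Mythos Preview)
Your argument is correct and is essentially identical to the paper's own proof: both proceed by contradiction, pass via Riesz' theorem to an a.e.-convergent subsequence, form the monotone envelope $g_k=\sup_{m\geqslant k}|f_{j_m}|$ (the paper's $\tilde f_k$), and invoke properties (i) and (ii) to derive the contradiction.
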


\begin{proof} We argue by a contradiction. Suppose (\ref{analog-Lebesgue}) fails. Then there exists a subsequence $\left\lbrace f_{j_k}\right\rbrace $ satisfying
\begin{equation} \label{contradiction}
\|f_{j_k}\|_{\mathcal{X}, \Omega} \geqslant \varepsilon >0, \qquad \forall k \in \mathbb{N}.
\end{equation}
Due to the Riesz theorem, there exists also a sub-subsequence $\left\lbrace f_{j_{k_l}}\right\rbrace $ such that
$$
f_{j_{k_l}}\rightarrow 0\quad \text{a.e.\  in} \quad \Omega.
$$
For simplicity of notation we renumber  the latter subsequence $\left\lbrace f_{j_{k_l}}\right\rbrace $ and denote its elements again by $f_j$. \vspace{0.2cm}

Setting
$
\tilde{f}_k:=\sup\limits_{j \geqslant k} |f_j|
$
we can easily see that $\tilde{f}_k\searrow 0$ a.e. in $\Omega$. Now, taking into account properties (i) and (ii) of the space $\mathcal{X}\left( \Omega\right) $ we immediately get a contradiction with inequalities (\ref{contradiction}). The proof is complete.
\end{proof}
\vspace{0.2cm}

\begin{lemma}\label{merzost'-to-0}
Let $f\in \mathcal{X}\left( \Omega\right) $, and let $\mu (\rho):=\sup\limits_{x \in \Omega}\|f\|_{\mathcal{X}, B_{\rho} (x)\cap \Omega}$.

Then
$$
\mu (\rho) \rightarrow 0 \quad as\ \rho \rightarrow 0.
$$
\end{lemma}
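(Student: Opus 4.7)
The plan is to argue by contradiction and reduce the statement to the convergence Lemma \ref{analog-Lebesgue-theorem} that was just proved. Note that the only non-trivial feature is the uniformity in the center $x$; for any fixed $x$, property (ii) alone already gives $\|f\|_{\mathcal{X}, B_{\rho}(x)\cap\Omega}\to 0$ as $\rho\to 0$, because $|f|\chi_{B_{\rho}(x)\cap\Omega}\searrow 0$ a.e.

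Suppose the conclusion fails. Then one can find $\varepsilon>0$, a sequence of radii $\rho_k\to 0$ and a sequence of centers $x_k\in\Omega$ such that
\begin{equation*}
\|f\|_{\mathcal{X}, B_{\rho_k}(x_k)\cap\Omega}\geqslant \varepsilon, \qquad \forall k\in\mathbb{N}.
\end{equation*}
Define the truncations $f_k:=f\cdot \chi_{B_{\rho_k}(x_k)\cap\Omega}$. By construction $|f_k(x)|\leqslant |f(x)|$ pointwise, and $f\in\mathcal{X}(\Omega)$ by hypothesis, so the dominating function required by Lemma \ref{analog-Lebesgue-theorem} is available.

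The key observation is that $f_k\to 0$ in measure on $\Omega$, with a rate depending only on $\rho_k$, uniformly in $x_k$. Indeed, the support of $f_k$ is contained in $B_{\rho_k}(x_k)$, so for every $\eta>0$
\begin{equation*}
|\{x\in\Omega: |f_k(x)|>\eta\}|\leqslant |B_{\rho_k}(x_k)|=c_n\rho_k^n\xrightarrow[k\to\infty]{}0.
\end{equation*}
Consequently the hypotheses of Lemma \ref{analog-Lebesgue-theorem} are fulfilled, and we conclude
\begin{equation*}
\|f_k\|_{\mathcal{X},\Omega}=\|f\|_{\mathcal{X}, B_{\rho_k}(x_k)\cap\Omega}\longrightarrow 0,
\end{equation*}
contradicting the lower bound $\varepsilon$ above.

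There is no real obstacle; the only thing to notice is that the Lebesgue measure of $B_{\rho_k}(x_k)$ depends on $\rho_k$ alone, which is what turns the pointwise statement (property (ii)) into a uniform one. Essentially, uniformity in the center is a free byproduct of working with a translation-invariant measure of the supports. No compactness argument or covering lemma on $\overline{\Omega}$ is needed.
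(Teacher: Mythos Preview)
Your proof is correct and follows essentially the same approach as the paper: both reduce to Lemma~\ref{analog-Lebesgue-theorem} via the observation that $f\cdot\chi_{B_{\rho}(x)}\to 0$ in measure since the supporting balls have Lebesgue measure $c_n\rho^n\to 0$, independently of the center. The only cosmetic difference is that the paper argues directly by choosing, for each $\rho$, an approximate maximizer $x^*(\rho)$ with $\|f\|_{\mathcal{X}, B_{\rho}(x^*)\cap\Omega}\geqslant \tfrac{1}{2}\mu(\rho)$, whereas you frame it as a contradiction; the substance is identical.
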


\begin{proof} For every $\rho >0$ there exists a point $x^*=x^*(\rho)\in \Omega$ such that
$$
\|f\|_{\mathcal{X}, B_{\rho}(x^*)\cap\Omega} \geqslant \frac{1}{2}\mu (\rho).
$$

Next, for the sequence
$
f_{\rho}:=f \cdot \chi_{B_{\rho }(x^*)}
$
it is evident that $|f_{\rho}|\rightarrow 0$  in measure on $\Omega$. 
Application of Lemma~\ref{analog-Lebesgue-theorem} finishes the proof.
\end{proof} 

\begin{remark}
We call $\mu (\rho):=\sup\limits_{x\in \Omega}\|f\|_{\mathcal{X}, B_{\rho}(x)\cap \Omega}$ the modulus of continuity of function $f$ in $\mathcal{X} \left( \Omega\right) $.
\end{remark} \vspace{0.2cm}

\begin{lemma} \label{operator-approximation}
Let 
$D(Du) \in \mathcal{X}\left( \Omega \right) $, let $\mathcal{L}$ be defined by (\ref{numer}), 
and  let 
$\mathcal{L}u~\in~\mathcal{X} \left( \Omega\right) $. 
There exist the
family of operators
$$
\mathcal{L}_{\varepsilon}=-a^{ij}_{\varepsilon}(x)D_iD_j+b^{i}_{\varepsilon}(x)D_i
$$
with smooth coefficients $a^{ij}_{\varepsilon}$ and bounded coefficients $b^i_{\varepsilon}$ satisfying
\begin{gather}
\nu {\cal I}_n\le (a^{ij}_{\varepsilon}(x))\le\nu^{-1}{\cal I}_n, \qquad x\in \Omega, \label{app-ellipticity-a}\\
|b^i_{\varepsilon}(x)| \leqslant |b^i(x)|, \qquad  x\in\Omega, \label{app-estimate-b}\\
\|\left( \mathcal{L}-\mathcal{L}_{\varepsilon}\right) u\|_{\mathcal{X}, \Omega}\rightarrow 0\quad \text{as}\quad \varepsilon\rightarrow 0, \label{app-convergence-b}
\end{gather}
respectively.
\end{lemma}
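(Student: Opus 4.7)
The plan is to construct $a^{ij}_\varepsilon$ by mollifying a suitable extension of $a^{ij}$ and $b^i_\varepsilon$ by truncating at a large level set, and then to verify (\ref{app-convergence-b}) via Lemma~\ref{analog-Lebesgue-theorem}. First I would extend each $a^{ij}$ from $\Omega$ to $\mathbb{R}^n$ while preserving the bound $\nu \mathcal{I}_n \leq (a^{ij}(x)) \leq \nu^{-1}\mathcal{I}_n$ (e.g.\ set $a^{ij}$ equal to a constant multiple of the identity outside a neighborhood of $\overline{\Omega}$), and set $a^{ij}_\varepsilon := \rho_\varepsilon * a^{ij}$ for a nonnegative smooth mollifier $\rho_\varepsilon$ of unit mass. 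Because the matrix inequality is preserved under convex combinations, $a^{ij}_\varepsilon$ is smooth and satisfies (\ref{app-ellipticity-a}); by Lebesgue's differentiation theorem, $a^{ij}_\varepsilon \to a^{ij}$ a.e.\ in $\Omega$. For the lower-order part I define
$$
b^i_\varepsilon(x) := b^i(x)\,\chi_{\{|b^i|\leqslant 1/\varepsilon\}}(x),
$$
which is bounded by $1/\varepsilon$, fulfills (\ref{app-estimate-b}) by construction, and converges to $b^i$ a.e.

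To verify (\ref{app-convergence-b}) I would split
$$
(\mathcal{L}-\mathcal{L}_\varepsilon)u = (a^{ij}_\varepsilon - a^{ij})D_iD_j u + (b^i - b^i_\varepsilon)D_i u.
$$
For the principal part, the first term converges to $0$ a.e., and is dominated pointwise by $2\nu^{-1}|D_iD_ju|$, which belongs to $\mathcal{X}(\Omega)$ by property (i) together with the hypothesis $D(Du)\in\mathcal{X}(\Omega)$; so Lemma~\ref{analog-Lebesgue-theorem} forces its $\mathcal{X}$-norm to zero. For the lower-order part, the key observation is that $b^i D_i u$ itself lies in $\mathcal{X}(\Omega)$. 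Indeed, each $a^{ij}D_iD_ju$ is in $\mathcal{X}(\Omega)$ by property (i) (bounded coefficient times an $\mathcal{X}$-function), hence, using that $\mathcal{X}(\Omega)$ is a vector space,
$$
b^i D_i u = \mathcal{L}u + a^{ij}D_iD_j u \in \mathcal{X}(\Omega).
$$
Since $(b^i-b^i_\varepsilon)D_iu = b^i D_i u\cdot\chi_{\{|b^i|>1/\varepsilon\}}$ tends to $0$ a.e.\ (as $b^i$ is finite a.e.) and is dominated by $|b^iD_iu|\in\mathcal{X}(\Omega)$, Lemma~\ref{analog-Lebesgue-theorem} gives the required convergence of the second term as well.

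The main obstacle is the lower-order term. Since $b^i$ is not assumed bounded, some truncation is unavoidable in order to produce a bounded $b^i_\varepsilon$; the delicate point is then that the natural pointwise dominant for $(b^i-b^i_\varepsilon)D_iu$, namely $|b^iD_iu|$, is not given directly by the hypotheses and must be recovered indirectly by combining $\mathcal{L}u\in\mathcal{X}(\Omega)$ with $D(Du)\in\mathcal{X}(\Omega)$ and the Banach-lattice structure encoded in properties (i)--(ii). The principal-part convergence, in contrast, is a standard consequence of mollification, requiring only that ellipticity is preserved under convex averaging.
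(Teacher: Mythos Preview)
Your treatment of the principal part is correct and matches the paper's. The lower-order part, however, has a gap in the domination step.

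You truncate each $b^i$ on its own level set $\{|b^i|\leqslant 1/\varepsilon\}$ and then claim that $(b^i-b^i_\varepsilon)D_iu$ is dominated by $|b^iD_iu|\in\mathcal{X}(\Omega)$. But what you showed to lie in $\mathcal{X}(\Omega)$ is only the \emph{sum} $\sum_i b^iD_iu=\mathcal{L}u+a^{ij}D_iD_ju$; nothing in the hypotheses places the individual products $b^iD_iu$ (no summation) in $\mathcal{X}(\Omega)$. And the inequality
\[
\Bigl|\sum_i b^i\,\chi_{\{|b^i|>1/\varepsilon\}}\,D_iu\Bigr|\;\leqslant\;\Bigl|\sum_i b^iD_iu\Bigr|
\]
is false in general, since truncating the summands on \emph{different} sets can destroy cancellations present in the full sum. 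So neither reading yields the required majorant.

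The repair is simple: truncate all components on the common set $E_\varepsilon:=\{|\mathbf{b}|\leqslant 1/\varepsilon\}$, i.e.\ put $b^i_\varepsilon:=b^i\chi_{E_\varepsilon}$. Then $b^i_\varepsilon$ is still bounded, $|b^i_\varepsilon|\leqslant|b^i|$ holds, and now
\[
(b^i-b^i_\varepsilon)D_iu=\Bigl(\sum_i b^iD_iu\Bigr)\chi_{\{|\mathbf{b}|>1/\varepsilon\}},
\]
which is genuinely dominated by $\bigl|\sum_i b^iD_iu\bigr|\in\mathcal{X}(\Omega)$, and Lemma~\ref{analog-Lebesgue-theorem} applies exactly as you intended.

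For comparison, the paper also starts from componentwise truncation $\widetilde b^i_\varepsilon=\min\{|b^i|,\varepsilon^{-1}\}\cdot\text{sign}\,b^i$, hits the same obstruction, and resolves it by a pointwise, $u$-dependent ``correction'' of the coefficients that forces $|b^i_\varepsilon D_iu|\leqslant|b^iD_iu|$ (summed) at every point. Your route with a single truncation set is cleaner once the gap above is fixed.
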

\begin{proof} We start with extension of $a^{ij}$ on the whole $\mathbb{R}^n$ by the identity matrix and denote by $a^{ij}_{\varepsilon}$  the standard mollification of  extended functions $a^{ij}$.
By construction, the coefficients $a^{ij}_{\varepsilon}$ are smooth functions converging as $\varepsilon \rightarrow 0$ to $a^{ij}$ a.e. in $\Omega$. Moreover, it is clear that inequalities (\ref{app-ellipticity-a}) are true. 

Further, we set 
\begin{equation} \label{def-b^i_e}
\widetilde{b}^{i}_{\varepsilon} (x):=\min \left\lbrace |b^{i}(x)|, \varepsilon^{-1}\right\rbrace \cdot \text{sign}\, b^{i}(x).
\end{equation}
In view of (\ref{def-b^i_e}), it is evident that $\widetilde{b}^i_{\varepsilon}D_iu$ converges as $\varepsilon\rightarrow 0$ to $b^iD_iu$  almost everywhere~in~$\Omega$. We claim that it is possible to change $\widetilde{b}^i_{\varepsilon}$ such that the "corrected coefficients" $b^i_{\varepsilon}$  satisfy
\begin{equation} \label{for-majoranta}
|b^i_{\varepsilon}D_iu| \leqslant |b^iD_iu| \quad \text{in} \quad \Omega.
\end{equation}
Indeed, if $|\widetilde{b}^i_{\varepsilon}D_iu|\leqslant |b^iD_iu|$ in $\Omega$ then (\ref{for-majoranta}) holds with $b^i_{\varepsilon}\equiv \widetilde{b}^i_{\varepsilon}$. Otherwise,
consider a point $x^0 \in \Omega$ where $|\widetilde{b}^{i}_{\varepsilon}(x^0)D_iu(x^0)|>|b^i(x^0)D_iu(x^0)|$. 
\begin{itemize}
\item[$\boxed{a)}$] Let $\widetilde{b}^i_{\varepsilon}(x^0)D_iu(x^0) > b^i(x^0)D_i u(x^0) \geqslant 0$. In this case we decrease all the coefficients $\widetilde{b}^i_{\varepsilon}(x^0)$ corresponding to the positive summands such that the both sums $b^i_{\varepsilon}D_iu$ and $b^iD_iu$  becomes equal.
\item[$\boxed{b)}$] Let $\widetilde{b}^i_{\varepsilon}(x^0)D_iu(x^0) < b^i(x^0)D_i u(x^0) \leqslant 0$. In this case we decrease all the coefficients $\widetilde{b}^i_{\varepsilon}(x^0)$ corresponding to the negative summands such that the both sums $b^i_{\varepsilon}D_iu$ and $b^iD_iu$  becomes equal.
\item[$\boxed{c)}$] Finally, let $\widetilde{b}^i_{\varepsilon}(x^0)D_iu(x^0)$ and $b^i(x^0)D_iu(x^0)$ have different signs. In this case we apply to $-b^i_{\varepsilon}(x^0)$ the arguments from  case a) or from case~b), respectively.
\end{itemize}
Due to construction, the "corrected sum" $b^i_{\varepsilon}D_iu$ also converges as $\varepsilon \to 0$ to $b^iD_iu$ a.e. in $\Omega$, and pointwise inequalities (\ref{app-estimate-b}) hold true. 

Finally, taking into account (\ref{for-majoranta}) and  applying Lemma~\ref{analog-Lebesgue-theorem} we get                (\ref{app-convergence-b}).
\end{proof}



\section{Gradient estimates near the boundary}

\begin{lemma} \label{analog-lemma1-AU}
Let ${\cal{N}} \subset\mathbb{R}^n_+ $ be an open set, let $\gamma=\frac{\nu}{\sqrt{n-1}}$, let $\rho >0$, and let
$$
\Pi_{\rho}=\left\{y \in \mathbb{R}^n : |y_i|<\rho \quad \text{\it for} \quad i=1,\dots,n-1; \quad
0<y_n<\gamma\rho\right\}.
$$
We assume that $|\mathbf{b}|\in \mathcal{X}\left( \mathcal{N} \right) $ and a function $v$ satisfies the conditions
$$
v \in \mathcal{W}^2_{\mathcal{X}, loc} \left( \mathcal{N} \right), \quad
v \geqslant 0 \quad \text{\it in}\quad
\Pi_{\rho}, \quad v \geqslant k=\textit{const}>0 \quad \text{\it on}\quad \partial
{\cal N} \cap \overline{\Pi}_{\rho}.
$$

Then 
$$
v \geqslant  C_1 k-C_2 k \|\mathbf{b}\|_{\mathcal{X}, \, \mathcal{N} \cap \Pi_{\rho}}-C_3\rho \|(\mathcal{L}v)_-\|_{\mathcal{X}, \, \mathcal{N} \cap \Pi_{\rho}} \quad \text{\it in}
\quad {\cal N}\cap 
B_{\frac{\gamma\rho}{4}}(z)  ,
$$
where 
$z=(0,\dots,0,\frac{1}{2}\gamma\rho)$, while 
$C_1=\frac{1}{16}\left( 1-\gamma^2\right) $,  $C_2=C_2(n, \nu, \|\mathbf{b}\|_{\mathcal{X}, \, \mathcal{N}})$, and 
$C_3~=~C_3(n, \nu, \|\mathbf{b}\|_{\mathcal{X}, \, \mathcal{N}})$.
\end{lemma}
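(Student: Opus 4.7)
The plan is to build an explicit barrier $\psi$ on $\Pi_{\rho}$ and to compare it with $v$ via the Aleksandrov-type maximum principle (\ref{max-principle}). The natural candidate, suggested by the specific value $\gamma=\nu/\sqrt{n-1}$, is the anisotropic paraboloid
\[
\psi(x)=k\Bigl(1-\frac{x_{n}}{\gamma\rho}\Bigr)^{2}-\frac{k}{\rho^{2}}|x'|^{2}.
\]
The first task is to check that $\psi\le v$ on $\partial(\mathcal{N}\cap\Pi_{\rho})$. On the top face $\{x_{n}=\gamma\rho\}$ one has $\psi=-k|x'|^{2}/\rho^{2}\le 0\le v$; on each lateral face $\{|x_{j}|=\rho\}$, the bound $|x'|^{2}\ge\rho^{2}$ combined with $(1-x_{n}/(\gamma\rho))^{2}\le 1$ again gives $\psi\le 0\le v$. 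Since $\mathcal{N}\subset\mathbb{R}^{n}_{+}$, the bottom face $\{x_{n}=0\}$ of $\Pi_{\rho}$ lies in $\partial\mathcal{N}$; there, and on the remaining part of $\partial\mathcal{N}\cap\overline{\Pi}_{\rho}$, the crude bound $\psi\le k\le v$ suffices.

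The second step is to verify that the principal part of $\mathcal{L}$ behaves correctly on $\psi$. A direct computation gives
\[
-a^{ij}D_{i}D_{j}\psi=\frac{2k}{\rho^{2}}\sum_{i<n}a^{ii}-\frac{2k}{\gamma^{2}\rho^{2}}\,a^{nn}.
\]
Using $\sum_{i<n}a^{ii}\le(n-1)/\nu$ and $a^{nn}\ge\nu$, together with $\gamma^{2}=\nu^{2}/(n-1)$, these two contributions cancel exactly, so $-a^{ij}D_{i}D_{j}\psi\le 0$ pointwise in $\Pi_{\rho}$. Only the first-order term survives, and from the explicit bound $|D\psi|\le C(n,\nu)\,k/\rho$ one gets $(\mathcal{L}\psi)_{+}\le C(n,\nu)(k/\rho)|\mathbf{b}|$.

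Next, I would apply (\ref{max-principle}) to the function $\psi-v$ on $\mathcal{N}\cap\Pi_{\rho}$. The boundary inequality $\psi-v\le 0$ is already verified, while $(\mathcal{L}(\psi-v))_{+}\le(\mathcal{L}\psi)_{+}+(\mathcal{L}v)_{-}$. Since $\mathrm{diam}(\mathcal{N}\cap\Pi_{\rho})\le C(n)\rho$, the maximum principle produces
\[
v\ge\psi-C_{2}\,k\|\mathbf{b}\|_{\mathcal{X},\,\mathcal{N}\cap\Pi_{\rho}}-C_{3}\,\rho\,\|(\mathcal{L}v)_{-}\|_{\mathcal{X},\,\mathcal{N}\cap\Pi_{\rho}}\qquad\text{in }\mathcal{N}\cap\Pi_{\rho},
\]
with $C_{2},C_{3}$ depending on $n,\nu$ and $\|\mathbf{b}\|_{\mathcal{X},\mathcal{N}}$ through $N_{0}$. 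Finally, on $B_{\gamma\rho/4}(z)$ one has $y_{n}\le 3\gamma\rho/4$ and $|y'|\le\gamma\rho/4$, which separately give $(1-y_{n}/(\gamma\rho))^{2}\ge 1/16$ and $|y'|^{2}/\rho^{2}\le\gamma^{2}/16$; hence $\psi(y)\ge k/16-k\gamma^{2}/16=C_{1}k$, completing the proof.

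The main obstacle I expect is identifying the right barrier and recognizing the cancellation in the Hessian computation: the value $\gamma=\nu/\sqrt{n-1}$ is chosen precisely so that $-a^{ij}D_{i}D_{j}\psi\le 0$ holds for every admissible $(a^{ij})$. Once this tuning is spotted, the boundary verification, the $\mathbf{b}$-induced error, and the evaluation of $\psi$ on $B_{\gamma\rho/4}(z)$ are all routine.
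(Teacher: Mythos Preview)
Your proof is correct and follows essentially the same approach as the paper: the same anisotropic paraboloid barrier $\psi$, the same verification that $-a^{ij}D_iD_j\psi\le 0$ thanks to the choice $\gamma=\nu/\sqrt{n-1}$, the same boundary check, the same application of (\ref{max-principle}) to $\psi-v$, and the same evaluation of $\psi$ on $B_{\gamma\rho/4}(z)$ yielding $C_1=(1-\gamma^2)/16$. Your explicit remark that the bottom face $\{x_n=0\}$ is absorbed into $\partial\mathcal{N}\cap\overline{\Pi}_\rho$ (since $\mathcal{N}\subset\mathbb{R}^n_+$) is a nice clarification the paper leaves implicit.
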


\vspace{0.2cm}
\begin{proof} The proof is similar in spirit to \cite[Lemma 1]{AU95}.

Consider the barrier function
$$
\psi (y)=k \left[\left( 1-\frac{y_n}{\gamma \rho}\right)^2 -\frac{|y'|^2}{\rho^2}\right].
$$
An elementary computation gives
$$
{\cal L} \psi \leqslant k \left(\frac{2(n-1)}{\rho^2}
\nu^{-1}-\frac{2}{\gamma^2 \rho^2}\nu\right)+|\mathbf{b}||D\psi| \leqslant N_1(n,\nu)|\mathbf{b}|\frac{k}{\rho}  \quad \text{in}
\quad \Pi_{\rho}.
$$
Moreover, setting
$$
\begin{aligned}
\mathcal{S}_1&=\{y \in \partial ({\cal N} \cap \Pi_{\rho}): |y_i|=\rho \quad \text{for some} \quad i=1,\dots,n-1\},\\
\mathcal{S}_2&=\{y \in \partial ({\cal N} \cap \Pi_{\rho}):
y_n=\gamma\rho \}
\end{aligned}
$$
we have
\begin{align*}
\psi \big|_{\mathcal{S}_1\cup \mathcal{S}_2} &\leqslant 0 \leqslant v,\\
\psi\big|_{\partial {\cal N}\cap \overline{\Pi}_{\rho}}&\leqslant k\leqslant
v\big|_{\partial {\cal N}\cap \overline{\Pi}_{\rho}}.
\end{align*}

Applying inequality (\ref{max-principle}) in ${\cal N}\cap \Pi_{\rho}$ to the
difference $\psi-v$ we obtain
$$
\psi-v \leqslant N_0 \cdot \textup{diam}(\Pi_{\rho})\cdot
\|(\mathcal{L}\psi-\mathcal{L}v)_+\|_{\mathcal{X}, \, \mathcal{N} \cap \Pi_{\rho}} \quad \text{in} \quad {\cal N}\cap \Pi_{\rho},
$$
and, consequently,
$$
\begin{aligned}
v &\geqslant k \left[\left( 1-\frac{\frac{3}{4}\gamma \rho}{\gamma \rho}\right)^2 -\frac{\gamma^2 \rho^2}{16 \rho^2}\right] -C_2 k \|\mathbf{b}\|_{\mathcal{X}, \, \mathcal{N} \cap \Pi_{\rho}}-C_3\rho \|(\mathcal{L}v)_-\|_{\mathcal{X}, \, \mathcal{N} \cap \Pi_{\rho}}\\
&=\frac{(1-\gamma^2)}{16}k-C_2 k \|\mathbf{b}\|_{\mathcal{X}, \, \mathcal{N} \cap \Pi_{\rho}}-C_3\rho \|(\mathcal{L}v)_-\|_{\mathcal{X}, \, \mathcal{N} \cap \Pi_{\rho}} \quad \text{in} \quad {\cal N} \cap B_{\frac{\gamma\rho }{4}}(z).
\end{aligned}
$$
\end{proof}

\vspace{0.3cm} 
Our next statement is a version of Theorem~2.3 \cite{Naz12}.

\begin{lemma}\label{lemma2.3-LU88}
Let $v \in \mathcal{W}^2_{\mathcal{X}, loc} \left( \Omega \right) \cap \mathcal{C}\left( \overline{\Omega}\right) $, let $v\big|_{\partial\Omega}=0$, and let $|\mathbf{b}|\in \mathcal{X} \left( \Omega \right)$.
Suppose also that 
for all $\rho \leqslant \rho_{*} \leqslant 1$ the inequalities
$$
\|b^n\|_{\mathcal{X}, \mathcal{P}_{\rho} \cap \Omega} \leqslant \mathfrak{B}\sigma \left( \rho / \rho_{*}\right), \quad
\|\left( \mathcal{L}v\right)_{+} \|_{\mathcal{X}, \mathcal{P}_{\rho} \cap \Omega} \leqslant \mathfrak{F}\sigma \left( \rho / \rho_{*}\right)
$$
hold true. Here $\mathfrak{B}$ and $\mathfrak{F}$ are some positive constants, while a function $\sigma$ belongs to $\mathcal{D}_1$.

Then
\begin{equation} \label{2.3-LU88}
\sup\limits_{0<x_n<\rho}\frac{v(0,x_n)}{x_n}\leqslant C_4 \left( \rho^{-1}\sup\limits_{\mathcal{P}_{\rho}\cap \Omega}v +\mathfrak{F} \mathcal{J}_{\sigma} \left( \rho /\rho_{*}\right)\right), \quad \forall \rho\leqslant \rho_{*}.
\end{equation}
Here the constant $C_4$ depends on $n$, $\nu$,  $\mathfrak{B}$, $\sigma$, and on the moduli of continuity of $|\mathbf{b}'|$ in $\mathcal{X}\left( \mathcal{P}_{\rho_{*}}\cap\Omega\right)$,  whereas $\mathcal{J}_{\sigma}$ is a function defined by formula (\ref{J-sigma}).
\end{lemma}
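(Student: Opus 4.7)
The plan is to prove (\ref{2.3-LU88}) by a dyadic-scale iteration based on the Aleksandrov-type maximum principle (\ref{max-principle}) applied to $v$ minus a barrier at each scale. Set $\rho_k := 2^{-k}\rho$, $M_k := \sup_{\mathcal{P}_{\rho_k}\cap\Omega} v$, and $N_k := M_k/\rho_k$. I aim for a contractive recursion of the form
$$N_k \;\leq\; \theta\,N_{k-1} \;+\; C\,\mathfrak{F}\,\sigma(\rho_{k-1}/\rho_*)$$
with some universal $\theta \in (0,1)$. Iterating yields $N_k \leq \theta^k N_0 + C\,\mathfrak{F}\sum_{j=1}^{k}\theta^{k-j}\sigma(\rho_{j-1}/\rho_*)$; the decrease of $\sigma(t)/t$ (Remark~\ref{zamechanie}) gives $\sigma(\rho_{j-1}/\rho_*) \leq (\log 2)^{-1}\int_{\rho_j/\rho_*}^{\rho_{j-1}/\rho_*} \sigma(\tau)/\tau\, d\tau$, so the sum is bounded by $C\,\mathcal{J}_\sigma(\rho/\rho_*)$, and $N_k \leq C(N_0 + \mathfrak{F}\mathcal{J}_\sigma(\rho/\rho_*))$ uniformly in $k$. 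For any $x_n \in (0,\rho)$, pick the unique $k$ with $\rho_{k+1} < x_n \leq \rho_k$; then $v(0,x_n) \leq M_k \leq 2\,x_n\,N_k$, which is precisely (\ref{2.3-LU88}).

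At scale $\rho_{k-1}$ I would construct a barrier $\psi_k$ on $\mathcal{P}_{\rho_{k-1}}\cap\Omega$ of the form
$$\psi_k(x) \;=\; \bigl(\alpha\,M_{k-1}/\rho_{k-1} + \beta\,\mathfrak{F}\,\sigma(\rho_{k-1}/\rho_*)\bigr)\cdot x_n\cdot h(x/\rho_{k-1}),$$
where $h$ is a single universal profile (depending only on $n,\nu$) satisfying (i) $\psi_k \geq v$ on the lateral and upper boundary of $\mathcal{P}_{\rho_{k-1}}\cap\Omega$ (where $v \leq M_{k-1}$); (ii) $\psi_k = 0$ on $\{x_n=0\} \supset \partial\Omega\cap\mathcal{P}_{\rho_{k-1}}$, using $F\geq 0$ and $\psi_k \propto x_n$; (iii) $-a^{ij}D_iD_j\psi_k \geq 0$ inside; and (iv) $\psi_k(0,x_n) \leq C(M_{k-1}/\rho_{k-1}+\mathfrak{F}\sigma(\rho_{k-1}/\rho_*))\cdot x_n$ for $x_n \leq \rho_k$. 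Applying (\ref{max-principle}) to $v-\psi_k$ in $\mathcal{P}_{\rho_{k-1}}\cap\Omega$ and using the pointwise bound $(\mathcal{L}(v-\psi_k))_+ \leq (\mathcal{L}v)_+ + |b^n|\,|D_n\psi_k| + |\mathbf{b}'|\,|D'\psi_k|$, together with the hypotheses $\|(\mathcal{L}v)_+\|_{\mathcal{X},\mathcal{P}_{\rho_{k-1}}\cap\Omega} \leq \mathfrak{F}\sigma(\rho_{k-1}/\rho_*)$, $\|b^n\|_{\mathcal{X},\mathcal{P}_{\rho_{k-1}}\cap\Omega} \leq \mathfrak{B}\sigma(\rho_{k-1}/\rho_*)$, and the smallness of $\|\mathbf{b}'\|_{\mathcal{X},\mathcal{P}_{\rho_{k-1}}\cap\Omega}$ provided by Lemma~\ref{merzost'-to-0}, yields the desired recursion. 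Contraction (i.e.\ $\theta<1$) is obtained once $k \geq k_0$ makes the smallness parameters fall below a universal threshold; the finitely many initial scales $k<k_0$ are absorbed into the constant $C_4$, together with the dependence on the moduli of continuity of $|\mathbf{b}'|$.

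The main technical obstacle is the explicit construction of the profile $h$ and the verification that a single universal choice works uniformly across all small dyadic scales. This is feasible because the ellipticity constant $\nu$ is scale-invariant and, crucially, because the rescaled domains $\rho_{k-1}^{-1}(\Omega\cap\mathcal{P}_{\rho_{k-1}})$ approach the half-space $\{x_n>0\}$ (indeed $\delta(\rho_{k-1})\to 0$ by (\ref{delta-to-zero}) forces $\partial\Omega\cap\mathcal{P}_{\rho_{k-1}} \subset \{0 \leq x_n \leq \delta(\rho_{k-1})\,\rho_{k-1}\}$), so condition (ii) reduces to the trivial $\psi_k \geq 0$ on $\{x_n=0\}$. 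A natural candidate is $h(y) = \eta(|y'|)\cdot \chi(y_n)$ with $\eta$ quadratic-in-$|y'|$ and $\chi$ a cut-off in $y_n$, chosen so that the second-derivative inequality (iii) is strict with a margin that absorbs the first-order $\mathbf{b}$-terms; verifying (iii) for the unscaled profile and then transporting the inequality to every scale via the change of variables $y=x/\rho_{k-1}$ is where uniform ellipticity and the universal character of $h$ are essential.
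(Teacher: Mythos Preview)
There is a genuine gap, and it occurs at the very first step: the barrier $\psi_k$ you describe cannot satisfy condition~(i). On the lateral wall $\{|x'|=\rho_{k-1}\}$ of $\mathcal{P}_{\rho_{k-1}}\cap\Omega$ you only know $v\leq M_{k-1}$; you have no a~priori bound of the form $v\lesssim x_n$ there (obtaining such a bound is precisely the content of the lemma). Since your barrier is of the form $\psi_k(x)=A\,x_n\,h(x/\rho_{k-1})$ with bounded $h$, it tends to $0$ as $x_n\to 0$ along the lateral wall, whereas $v$ need not. Hence $\psi_k\geq v$ fails near the bottom corner of the lateral boundary, and the maximum principle cannot be applied to $v-\psi_k$.

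More fundamentally, the contractive recursion $N_k\leq\theta N_{k-1}+C\mathfrak{F}\sigma(\rho_{k-1}/\rho_*)$ with $\theta<1$ is false. Take $\mathbf b=0$ and any solution of $\mathcal{L}v=0$ with $v|_{\partial\Omega}=0$ and $\partial_{\mathbf n}v(0)=c\neq 0$ (such solutions certainly exist for smooth domains; this is exactly the Hopf--Oleinik situation). Then one may take $\mathfrak{F}=0$, and your recursion would force $N_k\to 0$; but $M_k=\sup_{\mathcal{P}_{\rho_k}\cap\Omega}v\sim c\rho_k$, so $N_k\to c\neq 0$. Thus no universal $\theta<1$ can exist, regardless of how the barrier is built.

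The paper's proof avoids both pitfalls by working with a different quantity and a \emph{non}-contractive iteration. It sets $h_k=\zeta_k\rho_k$ with $\zeta_k=1/(k+k_0)$ and defines
\[
M_k=\sup_{\mathcal{P}_{\rho_k,h_{k-1}}\cap\Omega}\frac{v(x)}{\max\{x_n,h_k\}},
\]
so that $w_k=v-M_kx_n\leq 0$ on the \emph{top} of the thin cylinder $\mathcal{P}_{\rho_k,h_k}$ by construction (no lateral control of $v$ is needed). Then the boundary growth lemma is applied to $M_kh_k-w_k$ in a chain of small cylinders of size $h_k$ sliding horizontally; iterating $m\approx\rho_{k+1}/h_k$ times produces a decay factor $(1-\vartheta/2)^m$, which together with the Dini assumption yields
\[
M_{k+1}\leq M_k(1+\gamma_k)+C\mathfrak{F}\,\sigma(\rho_k/\rho_*)\cdot\frac{\zeta_k}{\zeta_{k+1}},
\]
with $\sum_k\gamma_k<\infty$. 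The product $\prod(1+\gamma_k)$ therefore converges, giving a uniform bound on $M_k$ rather than decay. This is the correct structural picture: the $M_k$ are bounded (and the bound depends on $M_1$, hence on $\rho^{-1}\sup v$), not contracting.
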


\begin{remark}
We recall that $0 \in \partial\Omega$.
\end{remark}
\begin{proof} First, we assume that $\rho \leqslant \overline{\rho}$, where $\overline{\rho} \leqslant \rho_{*}$ will be fixed later.
Following \cite{Naz12} we introduce the sequence of cylinders $\mathcal{P}_{\rho_k, h_k}$, $k\geqslant 0$, where $\rho_k=2^{-k}\rho$, $h_k=\zeta_k\rho_k$, while the sequence $\zeta_k\downarrow 0$ will be chosen later.

We set $w_k=v-M_kx_n$, where the quantities $M_k$, $k \geqslant 1$ are defined as
$$
M_k=\sup\limits_{\mathcal{P}_{\rho_k, h_{k-1}}\cap \Omega}\frac{v(x)}{\max{\left\lbrace x_n, h_k\right\rbrace }} \geqslant 
\sup\limits_{\left\lbrace \mathcal{P}_{\rho_k, h_{k-1}} \setminus \mathcal{P}_{\rho_k, h_k}\right\rbrace \cap \Omega}\frac{v(x)}{x_n}.
$$
It is easy to see that $w_k \leqslant 0$ on $\partial\Omega \cap \overline{\mathcal{P}}_{\rho_k,h_k}$, while the definition of $M_k$ gives $w_k \leqslant 0$ on the top of the cylinder $\mathcal{P}_{\rho_k, h_k}$.

Let $x^0 \in \mathcal{P}_{\rho_k-h_k, h_k}\cap \Omega$. Taking into account Remark~\ref{X-equal-Ln} we apply the so-called "boundary growth lemma" (see, for instance,  \cite[Lemma~2.5']{LU85} or  \cite[Lemma~2.6]{S10} or  \cite[Lemma~2.2]{Naz12}) to the (positive) function $M_kh_k~ -~w_k$ in 
$\mathcal{P}_{h_k}({x^0}\vphantom{x}')\cap \Omega$. It gives for $x\in \mathcal{P}_{h_k/2, h_k}\left({x^0}\vphantom{x}'\right) \cap \Omega$
\begin{equation}
\begin{aligned}
M_kh_k-w_k(x) \geqslant &M_kh_k \left[ \vartheta -N_2\|\textbf{b}\|_{\mathcal{X}, \mathcal{P}_{\rho_k}\cap \Omega}
\right] \\
&-N_3h_k
\|\left( \mathcal{L}w_k\right) _+\|_{\mathcal{X}, \mathcal{P}_{h_k}\left({x^0}\vphantom{x}'\right)\cap \Omega},
\end{aligned}
\label{Naz12-10}
\end{equation} 
where $\vartheta=\vartheta (n, \nu, \sigma, \mathfrak{B}) \in (0,1)$, the positive constant $N_2$ depends on the same parameters as $\vartheta$ whereas the positive constant $N_3$ is completely defined by the values of $n$, $\nu$ and $\mathfrak{B}$. We suppose that $\overline{\rho}$ is so small that the quantity in the square brackets is greater than $\vartheta /2$. Further, direct calculation shows that the assumptions of our lemma imply
\begin{align*}
\|\left( \mathcal{L}w_k\right) _+\|_{\mathcal{X}, \mathcal{P}_{h_k}\left({x^0}\vphantom{x}'\right)\cap \Omega} &\leqslant 
\|\left( \mathcal{L}v\right) _+\|_{\mathcal{X}, \mathcal{P}_{h_k}\left({x^0}\vphantom{x}'\right)\cap \Omega}+
M_k\|b^n\|_{\mathcal{X}, \mathcal{P}_{h_k}\left({x^0}\vphantom{x}'\right)\cap \Omega}\\
&\leqslant
\left( \mathfrak{F} +M_k \mathfrak{B}\right) \sigma ( \rho_k /\rho_{*}) .
\end{align*}
Substituting the last inequality into (\ref{Naz12-10}) and taking 
supremum w.r.t. $x^0$ we obtain
$$
\sup\limits_{\mathcal{P}_{\rho_k-h_k,h_k}\cap \Omega} w_k \leqslant M_kh_k \left[  1- \vartheta/2+N_2 \mathfrak{B} \sigma ( \rho_k /\rho_{*})\right]   +N_3 h_k\mathfrak{F} \sigma ( \rho_k /\rho_{*}) .
$$

Repeating previous arguments provides for integer $m \leqslant \frac{\rho_k}{h_k}$ the inequality
$$
\sup\limits_{\mathcal{P}_{\rho_k-mh_k, h_k}\cap \Omega} w_k \leqslant M_kh_k\left[ (1-\vartheta /2)^m+N_2 \mathfrak{B}\,\frac{\sigma ( \rho_k/\rho_{*}) }{\vartheta /2}\right] + N_3 h_k \mathfrak{F} \,\frac{\sigma ( \rho_k/\rho_{*} ) }{\vartheta/2}.
$$
Setting $m=\lfloor \frac{\rho_{k+1}}{h_k}\rfloor$, we arrive at
\begin{align*}
\sup\limits_{\mathcal{P}_{\rho_{k+1}, h_k} \cap \Omega}w_k &\leqslant \frac{M_k h_k}{1-\vartheta/2}
\left( \exp{\left( -\lambda
\frac{\rho_{k+1}}{h_k}\right) } 
+N_2 \mathfrak{B} \, \frac{\sigma ( \rho_{k}/\rho_{*} ) }{\vartheta/2} \right)\\ 
&+N_3h_k\mathfrak{F}\,\frac{\sigma \left( \rho_{k}/\rho_{*}\right)}{(1-\vartheta/2) \vartheta/2},
\end{align*}
where $\lambda=-\ln{\left( 1-\vartheta/2\right) }>0$.
 
Therefore, for $x\in \mathcal{P}_{\rho_{k+1}, h_k}\cap \Omega$
\begin{equation}
\frac{w_k(x)}{\max{\left\lbrace x_n, h_{k+1}\right\rbrace } }\leqslant M_k \gamma_k + N_3\mathfrak{F}\,\frac{\sigma ( \rho_{k}/\rho_{*})}{(1-\vartheta/2) \vartheta/2} \cdot \frac{2\zeta_k}{\zeta_{k+1}},
\label{Naz12-11}
\end{equation}
where $\gamma_k=\frac{1}{1-\vartheta/2}\frac{2\zeta_k}{\zeta_{k+1}}\cdot \left( \exp{\left( -\frac{\lambda}{2\zeta_k}\right)} +
N_2 \mathfrak{B} \, \frac{\sigma ( \rho_{k}/\rho_{*} )}{\vartheta/2}\right) $.
\vspace{0.2cm}

Estimate (\ref{Naz12-11}) implies
\begin{align*}
M_{k+1}&\leqslant M_k\left( 1+\gamma_k\right) + N_3\mathfrak{F}\, \frac{\sigma ( \rho_{k}/\rho_{*})}{ (1-\vartheta/2)\vartheta/2} \cdot \frac{2\zeta_k}{\zeta_{k+1}} \\
&\leqslant M_1\cdot \prod\limits_{j=1}^k \left( 1+\gamma_j\right)+2N_3 \mathfrak{F} \cdot
\sum\limits_{j=1}^k\sigma ( \rho_{j}/\rho_{*}) \frac{\zeta_j}{\zeta_{j+1}} \cdot
\prod\limits_{l=j}^{k-1}(1+\gamma_l) .
\end{align*}
We set $\zeta_k=\frac{1}{k+k_0}$ and choose $k_0$ so large and $\overline{\rho}/\rho_{*}$ so small that $\gamma_1 \leqslant \frac{1}{2}$. Note that $k_0=k_0(n, \nu, \sigma, \mathfrak{B})$ while $\overline{\rho}/\rho_{*}$ depends  on the same parameters as $k_0$ and, in addition, on the moduli of continuity of $|\textbf{b}'|$ in $\mathcal{X}\left( \mathcal{P}_{\rho_{*}}\cap \Omega\right) $.

Now we observe that the first term in $\gamma_k$ forms a convergent series. The same is true for the second term, since
$$
\sum\limits_{k=1}^{\infty}\sigma ( 2^{-k}\rho/\rho_{*}) \asymp \int\limits_0^{\infty}
\sigma ( 2^{-s}\rho /\rho_{*}) ds \asymp \mathcal{J}_{\sigma} ( \rho /\rho_{*}) .
$$
Therefore, the infinite product $\Pi=\prod\limits_k
\left( 1+\gamma_k\right) $ also converges, and we obtain for $k>1$ the inequality
\begin{equation}
\begin{aligned}
M_k &\leqslant 
\Pi \cdot \left( M_1+2N_3\mathfrak{F}\cdot \sum\limits_{j=1}^k \sigma ( \rho_j / \rho_{*})\frac{\zeta_j}{\zeta_{j+1}}\right) \\
&\leqslant\Pi \cdot \left( M_1 +N_4(n, \nu, \sigma, \mathfrak{B})\mathfrak{F}\, \mathcal{J}_{\sigma}( \rho / \rho_{*}) \right). 
\end{aligned}
\label{Naz12-11a}
\end{equation}
Thus, all $M_k$ are bounded. 
It remains only to note that 
\begin{equation}
M_1 \leqslant \frac{1}{h_1}\sup\limits_{\mathcal{P}_{\rho/2}\cap \Omega}v.
\label{Naz12-11b}
\end{equation} 
Combining (\ref{Naz12-11a}) and (\ref{Naz12-11b}),
we arrive at 
\begin{equation} \label{strip-estimate-up-r/2}
\sup\limits_{0<x_n<\rho /2}\frac{v(0,x_n)}{x_n}\leqslant N_5(n, \nu, \sigma, \mathfrak{B}) \left( \rho^{-1}\sup\limits_{\mathcal{P}_{\rho /2} \cap \Omega}v+ \mathfrak{F}
\mathcal{J}_{\sigma}\left(\rho /\rho_{*}\right)\right).
\end{equation}

Further, it is easy to find a majorant for $\dfrac{v(0,x_n)}{x_n}$ for any $x_n \in [\rho /2,\rho )$ since
\begin{equation} \label{strip-estimate-after-r/2}
\sup\limits_{\rho /2 \leqslant x_n<\rho}\frac{v(0,x_n)}{x_n}\leqslant 2\rho^{-1}\sup\limits_{\rho /2 \leqslant x_n<\rho }v(0,x_n) \leqslant 
2\rho^{-1} \sup\limits_{\mathcal{P}_{\rho} \cap \Omega} v.
\end{equation}
Combination of (\ref{strip-estimate-up-r/2}) and (\ref{strip-estimate-after-r/2}) implies (\ref{2.3-LU88}) with $C_4=\max{\left\lbrace N_5, 2\right\rbrace }$ for $\rho \leqslant \overline{\rho}$. 

Now, we consider $\rho > \overline{\rho}$. If $x_n < \overline{\rho}$ then  the estimate
\begin{equation}
\frac{v(0, x_n)}{x_n}\leqslant 2N_5 \left( \overline{\rho}^{\,-1}\sup\limits_{\mathcal{P}_{\rho } \cap \Omega}v+ \mathfrak{F}
\mathcal{J}_{\sigma}\left(\rho /\rho_{*}\right)\right)
\label{Naz12-13}
\end{equation}
follows from the above arguments. Otherwise, i.e. for $x_n \geqslant \overline{\rho}$, inequality (\ref{Naz12-13}) is especially true. Thus, for $\rho > \overline{\rho}$ we again arrive at (\ref{2.3-LU88}) with 
$C_4=
 \max{\left\lbrace N_5, 2\right\rbrace } \overline{\rho}^{\,-1} 
$. The proof is complete.
\end{proof} \vspace{0.1cm}

\section{Main results}

Recall that $\Omega$ satisfies the assumptions from Subsection~2.1.
Throughout this section we shall suppose that $\mathcal{L}$ is defined by (\ref{numer}),  $|\mathbf{b}|\in \mathcal{X}\left( \Omega \right) $, and a function $u$ satisfies the following assumptions:
\begin{equation} \label{main-assumptions-on-u}
u\in \mathcal{W}^2_{\mathcal{X}, loc} \left( \Omega\right) \cap \mathcal{C}\left( \overline{\Omega}\right), \quad \mathcal{L}u=0 \quad \textit{in}\quad \Omega,  \quad u\big|_{\partial\Omega \cap \overline{\mathcal{P}}_{\mathcal{R}_0}}=0.
\end{equation}

\vspace{0.1cm}

\begin{thm} \label{osc-estimate}
Let the inequality
$$
\sup\limits_{x  \in \mathcal{P}_{\mathcal{R}_0/2}}\|b^n\|_{\mathcal{X}, \mathcal{P}_{\rho}(x')\cap \Omega} \leqslant \mathfrak{B} \sigma ( \rho /\mathcal{R}_0) 
$$
hold true  for all $\rho \leqslant \mathcal{R}\underline{\textbf{}}_0/2$. Here $\mathfrak{B}$ is a positive constant, and a function $\sigma \in \mathcal{D}_1$ satisfies
\begin{equation} \label{relation-2}
\mathcal{J}_{\sigma}(t)=o(\delta (t)) \quad \text{as}\quad t \to 0.
\end{equation}

 
Then, there exists  a sufficiently small positive number $R_0$ completely defined by $n$, $\nu$, $\mathcal{R}_0$, $\mathfrak{B}$, by the functions $\sigma$, $\delta$,
and by the moduli of continuity of $|\textbf{b}'|$ in $\mathcal{X}\left( \Omega\right) $ such that for any $r\in (0, R_0/2)$ we have
\begin{equation}
\osc{\Omega\cap \mathcal{P}_{r/4}}\frac {u(x)}{x_n}\leqslant \left( 1-\varkappa \delta (r)\right) \osc{\Omega\cap \mathcal{P}_{2r}}\frac {u(x)}{x_n}.
\label{1}
\end{equation}
Here
 the  constant $\varkappa \in (0;1)$ 
is completely determined by
$n$, $\nu$.
\end{thm}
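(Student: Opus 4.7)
The plan is to reduce the statement to a one-sided bound on $u/x_n$, exploit the convexity of $F$ to find a "boundary bump" on which the auxiliary function $v:=Mx_n-u$ is quantitatively positive, and then invoke Lemma~\ref{analog-lemma1-AU} and a barrier argument to transport this positivity throughout $\mathcal P_{r/4}\cap\Omega$.

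First I would set $M:=\sup_{\mathcal P_{2r}\cap\Omega} u/x_n$ and $m:=\inf_{\mathcal P_{2r}\cap\Omega} u/x_n$, so the relevant oscillation is $M-m$. Replacing $u$ by $-u$ if necessary I may assume $M\ge -m$, hence $M\ge (M-m)/2\ge 0$. It then suffices to prove $u(x)/x_n\le M-\varkappa\delta(r)(M-m)$ on $\mathcal P_{r/4}\cap\Omega$. I would pass to $v:=Mx_n-u\ge 0$ on $\mathcal P_{2r}\cap\Omega$ and note that $\mathcal L v=Mb^n$, while $v=MF(x')$ on the graph part of $\partial\Omega$; the target inequality becomes $v(x)\ge \varkappa_0 M\delta(r)x_n$ on $\mathcal P_{r/4}\cap\Omega$.

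Next, convexity of $F$ with $F(0)=0$ forces $t\mapsto F(t\zeta')/t$ to be non-decreasing for every unit $\zeta'$, so $\delta(r)$ is attained at some $\bar x'$ with $|\bar x'|=r$ and $F(\bar x')=\delta(r)r$. By Lemma~\ref{Proposition}, $|\nabla F|\le\delta_1(r)$ on $|x'|\le r$, hence $F\ge\tfrac12\delta(r)r$ on a disc $D$ around $\bar x'$ of radius $r''\sim r\delta(r)/\delta_1(r)$; on the piece of the graph above $D$ one has $v\ge k:=\tfrac12 M\delta(r)r$. I would then rotate coordinates so that a supporting hyperplane of $\Omega$ at $(\bar x',F(\bar x'))$ becomes $\{y_n=0\}$, centre the cylinder $\Pi_{r''}$ from Lemma~\ref{analog-lemma1-AU} at that point, and set $\mathcal N:=\Omega\cap\Pi_{r''}$, so that $\partial\mathcal N\cap\overline{\Pi_{r''}}$ is contained in the bumped graph piece and $v\ge k$ there. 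Lemma~\ref{analog-lemma1-AU} then yields
\[
 v\ge C_1 k - C_2 k\|\mathbf b\|_{\mathcal X,\mathcal N} - C_3 r'' M\mathfrak B\sigma(r''/\mathcal R_0)
\]
on a ball $B\subset\mathcal N$ of radius $\sim r''$. Lemma~\ref{merzost'-to-0} makes $\|\mathbf b\|_{\mathcal X,\mathcal N}$ arbitrarily small for $R_0$ small, and hypothesis (\ref{relation-2}) combined with $\sigma\le\mathcal J_\sigma$ forces $\sigma(r''/\mathcal R_0)\ll\delta(r)$; together these absorb both error terms into $\tfrac12 C_1 k$ and produce $v\ge c_1 M\delta(r)r$ on $B$.

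To close, I would build a subsolution $\phi$ of $\mathcal L$ on $(\mathcal P_{r/4}\cap\Omega)\setminus B$ satisfying $\phi\le c_1 M\delta(r)r$ on $\partial B$, $\phi\le 0$ on the rest of $\partial(\mathcal P_{r/4}\cap\Omega)$ (where $v\ge 0$), and $\phi(x)\ge \varkappa_0 M\delta(r)x_n$ pointwise; the Aleksandrov-type maximum principle (\ref{max-principle}) applied to $v-\phi$ then gives (\ref{1}) after rewriting in terms of $u/x_n$. Since (\ref{max-principle}) is delicate for the given rough operator, the whole chain is first carried out for the mollified $\mathcal L_\varepsilon$ of Lemma~\ref{operator-approximation} and then passed to the limit via (\ref{app-convergence-b}); this is where assumption~(iii) enters. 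The main obstacle is the interior-transfer step: balancing the error terms of Lemma~\ref{analog-lemma1-AU} against $k\sim M\delta(r)r$ is exactly what forces hypothesis (\ref{relation-2}), and the bump radius $r''$ can be much smaller than $r$ when $\delta_1\gg\delta$, so the absorption must be verified at that finer scale, which pins down the quantitative constants $R_0$ and $\varkappa$.
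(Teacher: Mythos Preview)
Your opening reduction and the plan to invoke Lemma~\ref{analog-lemma1-AU} at a boundary ``bump'' are correct, but the scale you choose for the cylinder creates a genuine gap. With $\Pi_{r''}$ of radius $r''\sim r\,\delta(r)/\delta_1(r)$, the ball $B$ coming out of Lemma~\ref{analog-lemma1-AU} has radius $\sim r''$ and is centred near $|x'|=r$, hence is \emph{disjoint} from $\mathcal P_{r/4}$; the domain $(\mathcal P_{r/4}\cap\Omega)\setminus B$ you propose for the subsolution does not even meet $\partial B$. Any honest transfer from $B$ to $\mathcal P_{r/4}$ (annular barrier or Harnack chain on a larger set) costs a factor governed by $r/r''\sim\delta_1(r)/\delta(r)$, and this ratio is not controlled by $n,\nu$ alone. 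Your final remark that ``$r''\ll r$ \dots pins down $\varkappa$'' is precisely the problem: the theorem asserts $\varkappa=\varkappa(n,\nu)$, and your scheme cannot deliver that.

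The paper's device is to exploit the \emph{extremal} property of the maximiser $x^*$ of $F(x')/|x'|$ on $\{|x'|\le r\}$: at such a point the (sub)gradient $\nabla F(x^{*}{}')$ is parallel to $x^{*}{}'$. Passing to orthonormal $y$-coordinates at $x^*$ with $y_n$ the inward normal and $y_1$ the tangential projection of $x^{*}{}'$, one gets $x_n=r\delta(r)+y_1\sin\varphi+y_n\cos\varphi$ with $\sin\varphi\ge 0$; hence the \emph{full-size} cylinder $\Pi=\{0<y_1<r,\ |y_\tau|<r/2,\ 0<y_n<\gamma r/2\}$ already satisfies $x_n\ge r\delta(r)$ throughout, so $v\ge\tfrac{\omega}{2}r\delta(r)$ on $\partial\Omega\cap\overline\Pi$ without any appeal to a gradient bound. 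Lemma~\ref{analog-lemma1-AU} then yields the lower bound on a ball of radius $\sim r$, and a Harnack chain of length $\mathfrak N=\mathfrak N(n,\nu)$ reaches any $B_{\rho_0/8}(\widetilde z)$ with $|\widetilde z'|\le r/4$. Finally, upgrading $v\ge\widetilde k$ on that ball to $v(x)/x_n\ge\varkappa\omega\delta(r)$ all the way to the boundary is a separate nontrivial step your sketch does not address: the paper compares $v$ with an annular barrier $W$ on $\mathcal B(\widetilde z,\rho_0/8,\widetilde z_n)$, splits via the approximating operators $\mathcal L_\varepsilon$ into a piece controlled by $W$ and a remainder controlled by Lemma~\ref{lemma2.3-LU88}, and it is in bounding the latter that the Dini hypothesis on $b^n$ and condition~(\ref{relation-2}) are actually consumed.
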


\begin{proof}  The proof will be divided into 3 steps.
\paragraph{$\boxed{1.}$}
Our arguments are adapted
from \cite[Lemma~2]{AU95} and \cite[Lemma~3]{U96}. Let us denote
$$
m^{\pm}=\sup\limits_{\Omega\cap \mathcal{P}_{2r}}\pm \frac {u(x)}{x_n},\qquad
\omega=m^++m^-=\osc{\Omega\cap \mathcal{P}_{2r}}\frac {u(x)}{x_n}.
$$
Since $u\big|_{\partial\Omega}=0$ we have $m^{\pm}\geqslant 0$. Therefore, at least one of the numbers
$m^{\pm}$ is not less than $\frac{\omega}{2}$, and both of the numbers $m^{\pm}$ are less than $\omega$.

\vspace{0.3cm}
Let $m^+\geqslant \frac{\omega}{2}$ for definiteness. Then we
consider the nonnegative function $ v(x)=m^+x_n-u(x)$ in $\Omega\cap
\mathcal{P}_{2r}$; (if $m^->\frac{\omega}{2}$ then we consider the function
$v(x)=m^-x_n+u(x)$).

\vspace{0.3cm}
Due to definition of $\delta$, for any sufficiently small $r>0$ we can find a point $x^*
\in \partial \mathcal{P}_{r}\cap
\partial\Omega$ such that
$x^*_n = r\delta (r)$. Without loss of generality we may assume that $x_1^*=r$ and $x^*_{\tau}=0$ for $\tau =2,\dots, n-1$.

Next we assign to $x^*$ a local orthogonal coordinate system $y_1,\dots,y_n$ such that
\begin{enumerate}
\item[(a)] $y_1$- axis is directed along the projection of the vector $(x_1^*,\dots,x_{n-1}^*)$ onto tangential hyperplane to $\partial\Omega$ at $x^*$;
\item[(b)] $y_2$, $\dots$, $y_{n-1}$-axes are parallel to $x_2$, $\dots$, $x_{n-1}$-axes, respectively;
\item[(c)] $y_n$-axis is directed inside $\Omega$.
\end{enumerate}

Due to the extremal property of $x^*$ the axes $y_1, \dots, y_{n-1}$ lie in the supporting hyperplane to $\partial\Omega$ at $x^*$. Moreover, if $x^*$ is a smooth point of $\partial\Omega$ then $y_n$ is directed along  the inward normal  to $\partial\Omega$. 

Setting $\gamma=\frac{\nu}{\sqrt{n-1}}$ we consider in $y$-coordinates the cylinder 
$$
\Pi:=\left\lbrace  
y\in \mathbb{R}^n: \left|y_1-\frac{r}{2}\right|<\frac{r}{2}, \ |y_{\tau}|<\frac{r}{2}, \
0<y_n<\frac{1}{2}\gamma r
\right\rbrace  ,
$$
and the ball $B_{\rho_0}(z^0)$ with
$
\rho_0=\frac{1}{8}\gamma r$ and $z^0=\left(\frac{r}{2}, 0,\dots, 0,\frac{1}{4}\gamma r\right)
$.

It should be emphasized that from now on, all considerations will be carried out in $x$-coordinates.

\begin{figure}[htbp]
\centering
\includegraphics[width=0.85\textwidth]{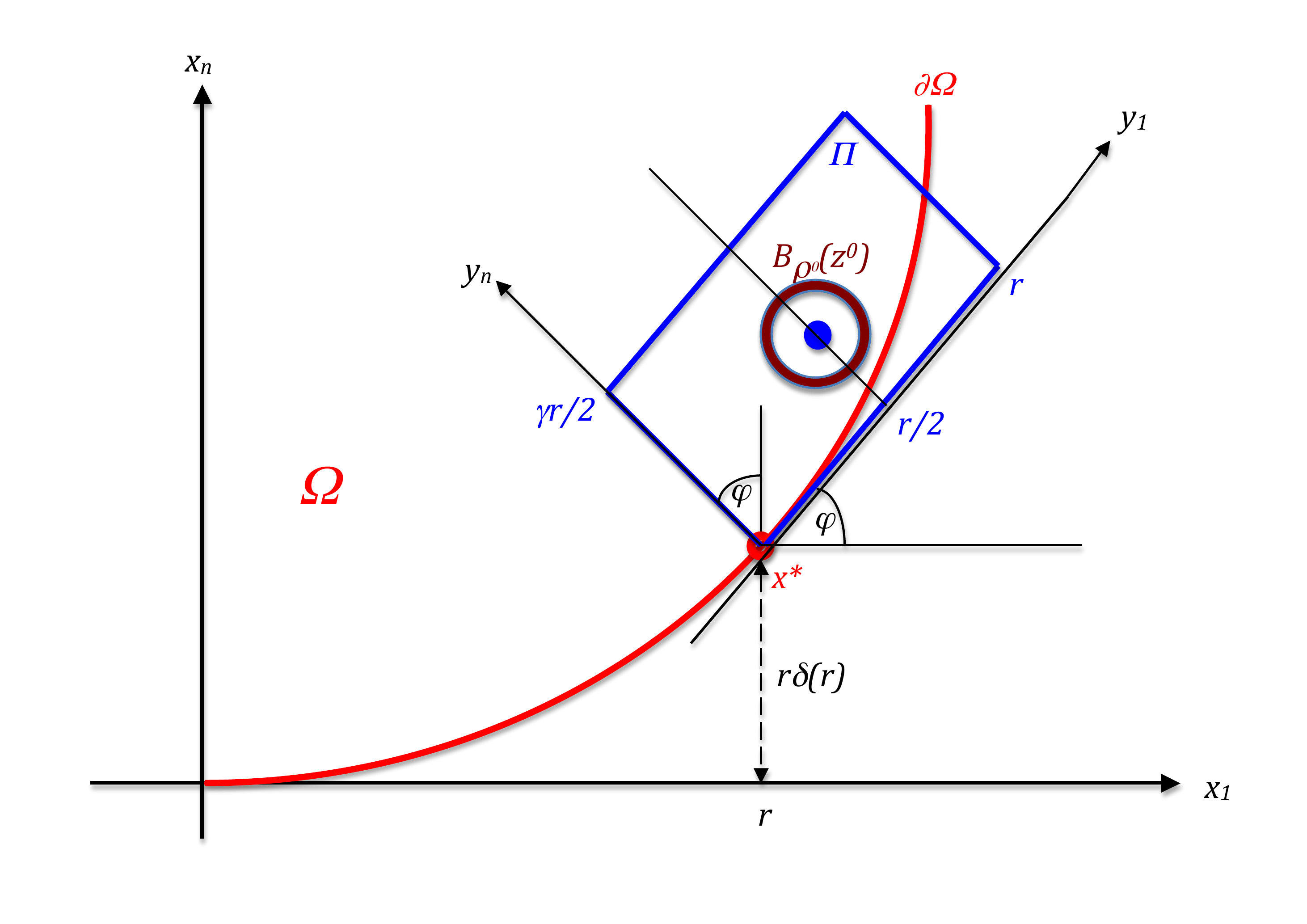}
\caption{Schematic view of $\Pi$ and $B_{\rho_0}(z^0)$.}
\label{fig-1}
\end{figure}

We claim that
\begin{equation} \label{claim-case1}
B_{\rho_0}(z^0)\subset \Omega.
\end{equation}
Indeed, assume that (\ref{claim-case1}) fails. Then there is a point $\hat{x} \in B_{\rho_0}(z^0)$ satisfying (in $x$-coordinates) the inequalities 
\begin{equation} \label{claim-2}
F(\hat{x}') \geqslant\hat{x}_n \geqslant z^0_n-\rho_0.
\end{equation}
Since $\hat{x}\in B_{\rho_0}(z^0)$ it is clear that $|\hat{x}'|\leqslant 2r$ and
$$
F(\hat{x}') \leqslant 2r \delta (2r). 
$$
On the other hand, denoting by $\varphi$ the angle between $x_n$- and $y_n$-axis (see Fig.~\ref{fig-1}) we conclude that
\begin{align*}
z^0_n-\rho_0 &=r \delta (r)+\frac{r}{2}\sin{\varphi}+\frac{\gamma r}{4}\cos{\varphi}-\frac{\gamma r}{8} 
\geqslant \frac{\gamma r}{8}\left( 2\cos{\varphi}-1\right). 
\end{align*}
Thus (\ref{claim-2}) is transformed into 
\begin{equation} \label{claim-3}
\gamma \left( 2\cos{\varphi}-1\right) \leqslant 16 \delta (2r).
\end{equation}

In view of (\ref{delta-to-zero}) and Lemma~\ref{Proposition}, one can choose  $R_0$ so small that 
$\delta_1(R_0)\leqslant 3/4$. It guarantees for all $r \leqslant R_0/2$ the inequalities
\begin{equation} \label{claim-4}
\cos{\varphi} =\frac{1}{\sqrt{1+\tan^2{\varphi}}}\geqslant \frac{1}{\sqrt{1+\delta_1^2(r)}}\geqslant \frac{1}{\sqrt{1+\delta_1^2(R_0)}} \geqslant \frac{4}{5}.
\end{equation}
Now, combining  (\ref{claim-4})  and (\ref{claim-3}) we get  a contradiction with relation (\ref{delta-to-zero}) provided $\delta (R_0)$ being small enough. The proof of (\ref{claim-case1}) is complete.
\paragraph{$\boxed{2.}$}
With (\ref{claim-case1}) at hands, we observe that
$$
\text{inf} \{x_n: x \in \Omega \cap \Pi \} \geqslant
r\delta (r).
$$
On the other hand, the condition $u=0$ for $x \in \partial\Omega
\cap \Pi $ gives the estimate
$$
v=m^+x_n \geqslant \frac{\omega}{2}x_n \quad \text{on}  \quad
\partial\Omega \cap \Pi.
$$
Hence, 
\begin{equation}
v \geqslant \frac{\omega}{2}r\delta (r)=:k_0\quad \text{on}\quad  \partial\Omega \cap
\Pi.
\label{2a}
\end{equation} 
So, we can apply Lemma~\ref{analog-lemma1-AU} to the function $v$ in cylinder $\Pi$. This
gives the estimate
$$
\inf\limits_{B_{\rho_0}(z^0)}  v \geqslant \left(   k_0 \big[  C_1-C_2 \|\mathbf{b}\|_{\mathcal{X}, \Omega \cap \mathcal{P}_{2r}} \big] -C_3\omega r\|b^n\|_{\mathcal{X}, \Omega \cap\mathcal{P}_{2r}}\right)_+
,
$$
where
$C_1$, $C_2$ 
and $C_3$ are 
the constants from Lemma~\ref{analog-lemma1-AU}. Decreasing $R_0$, if necessary, we may assume that $\|\mathbf{b}\|_{\mathcal{X}, \Omega \cap\mathcal{P}_{R_0}}\leqslant C_1/\left( 2C_2\right)$. Thus, we arrive~at
\begin{equation} \label{2}
\inf\limits_{B_{\rho_0}(z^0)}  v \geqslant \left( k_0\frac{C_1}{2}-C_3\omega r\|b^n\|_{\mathcal{X}, \Omega \cap\mathcal{P}_{2r}}\right)_+=:k_1. 
\end{equation}


Consider now  an arbitrary point
$\widetilde{z}=({\widetilde{z}}', r/4+\rho_0/8)$ such that
$|{\widetilde{z}}'|~\leqslant~\dfrac{r}{4}$. Observe also that $B_{\rho_0}(\widetilde{z}) \subset \Omega$, otherwise we get a contradiction with definition of $\delta (r)$.

We claim that 
\begin{equation}
\inf\limits_{B_{{\rho_0}/8}(\widetilde{z})}  v \geqslant
\big( k_0 \widetilde{C}_1 -\widetilde{C}_2   \omega r \|b^n\|_{\mathcal{X}, \Omega \cap\mathcal{P}_{2r}}\big)_+, 
\label{estimate-for-tilde-z}
\end{equation}
where 
$\widetilde{C}_1=\widetilde{C}_1(n,\nu)$,  whereas $\widetilde{C}_2$
is determined completely by   $n$, $\nu$, and  $\|\textbf{b}\|_{\mathcal{X}, \Omega}$.
Indeed, due to convexity of $\Omega$, for $l$ running from $1$ to a finite number
$\mathfrak{N}=\mathfrak{N}(n, \nu)$ chosen so that 
\begin{equation}\label{choice-of-N}
\frac{4}{3\rho_0}|z^0-\widetilde{z}| \leqslant \mathfrak{N}\leqslant \frac{2}{\rho_0}|z^0-\widetilde{z}|,
\end{equation}
and for points $z^{[l]}:=z^0-\frac{l}{\mathfrak{N}}(z^0-\widetilde{z})$ we
have $B_{\rho_0}(z^{[l]}) \subset \Omega$. It should be emphasized that  the lower and the upper bounds in (\ref{choice-of-N}) do not depend on~$r$.

 In view of (\ref{2}) we can 
compare in $\mathcal{B} (z^{[1]}, \rho_0/8, \rho_0)$ the function $v$ with the standard barrier function
$$
w(x)=k_1\,\frac{|x-z^{[1]}|^{-s}-\rho_0^{-s}}{(\rho_0/8)^{-s}-\rho_0^{-s}}.
$$
If $s = n\nu^{-2}$ then elementary calculation garantees the estimates
\begin{align*}
\mathcal{L} w &\leqslant |\mathbf{b}||Dw|\leqslant c(n, \nu)k_1|\mathbf{b}|\rho_0^{-1} \quad \text{in} \quad \mathcal{B} (z^{[1]}, \rho_0/8, \rho_0),\\
w(x)& = k_1 \leqslant v(x) \quad \text{on the sphere}\quad |x-z^{[1]}|=\frac{\rho_0}{8}\\
w(x)& =0\,\,\leqslant v(x)  \quad \text{on the sphere} \quad |x-z^{[1]}|=\rho_0.
\end{align*}
Application of the maximum principle (\ref{max-principle})   in $\mathcal{B} (z^{[1]}, \rho_0/8, \rho_0)$ to the difference $w-v$
gives us  the inequality
$$
v(x) \geqslant  \big( k_1\left[ w(x)-2cN_0 \|\mathbf{b}\|_{\mathcal{X}, \Omega \cap \mathcal{P}_{2r}}\right] - N_0 \frac{\gamma r}{4} \omega \|b^n\|_{\mathcal{X}, \Omega \cap \mathcal{P}_{2r}} \big)_+. 
$$
Since $B_{\rho_0/8}(z^{[2]}) \subset \mathcal{B}\left( z^{[1]}, \rho_0/8, 7\rho_0/8\right) $, the evident bound
$w \geqslant \theta (n,\nu)$ holds true in  $B_{\rho_0/8}(z^{[2]})$.

Decreasing $R_0$, if necessary, we ensure that $\|\mathbf{b}\|_{\mathcal{X}, \Omega \cap \mathcal{P}_{R_0}} \leqslant \left( 4cN_0\right)^{-1}\theta$. This implies
$$
\inf\limits_{B_{\rho_0/8}(z^{[2]})}v(x) \geqslant \left( \frac{ k_1\theta}{2}- N_0 \frac{\gamma r}{4}\omega \|b^n\|_{\mathcal{X}, \Omega \cap\mathcal{P}_{2r}} \right)_{+}=:k_2.
$$

Repeating this procedure for $\mathcal{B} (z^{[l]}, \rho_0/8, \rho_0)$ and $l=2, \dots, \mathfrak{N}$ we arrive at (\ref{estimate-for-tilde-z}) with 
$
\widetilde{C}_1=\left( \theta /2\right)^{\mathfrak{N}} 
$
and $\widetilde{C}_2=N_0\,\dfrac{\gamma}{4} \cdot\dfrac{1-\left( \theta /2\right)^{\mathfrak{N}} }{1-\left( \theta / 2\right) }$. 

Furthermore, it is clear that
$$
\left( k_0\widetilde{C}_1-\widetilde{C}_2 r\omega \|b^n\|_{\mathcal{X}, \Omega\cap\mathcal{P}_{2r}}\right)_+\geqslant 
\omega r \left( \frac{1}{2}\widetilde{C}_1\delta (r)-\widetilde{C}_2 \mathfrak{B}\sigma\left( r /\mathcal{R}_0\right) \right)_+,
$$
while inequalities (\ref{relation-1}) and (\ref{relation-sigma}) guarantee that
$$
\sigma\left( r /\mathcal{R}_0\right) \leqslant \frac{\mathcal{J}_{\sigma}(r)}{\mathcal{R}_0}.
$$
Decreasing again $R_0$ and
taking into account the assumption (\ref{relation-2}) and the above inequalities,
we can transform (\ref{estimate-for-tilde-z}) into the form
\begin{equation} \label{better-estimate-for-tilde-z}
\inf\limits_{B_{\rho_0/8}(\widetilde{z})} v\geqslant \frac{1}{4}\widetilde{C}_1\omega r\delta (r)=:\widetilde{k}.
\end{equation}

\paragraph{$\boxed{3.}$}
Now,
we take a small $\eta >0$, define the set 
$$
\mathcal{A}_{\eta}:=\mathcal{B}(\widetilde{z}, \rho_0/8, \widetilde{z}_n) \cap \Omega \cap \left\lbrace x\in \mathcal{P}_{\mathcal{R}_0} : F(x')+\eta <x_n<\mathcal{R}_0\right\rbrace 
$$
and introduce in $\mathcal{A}_{\eta}$ the barrier function 
$$
W(x)=\mu\widetilde{k}\,\frac{|x-\widetilde{z}|^{-s}-\left( \widetilde{z}_n\right)^{-s} }{\left( \rho_0/8\right)^{-s}-\left( \widetilde{z}_n\right)^{-s} },
$$
where $s=n\nu^{-2}$ and $0<\mu \leqslant 1$.

Notice that $D\left( Du\right) \in \mathcal{X}\left( \mathcal{A}_{\eta}\right)$. Using Lemma~\ref{operator-approximation} we construct the family  of operators $\mathcal{L}_{\varepsilon}$ satisfying $\|\mathcal{L}_{\varepsilon}u\|_{\mathcal{X}, \mathcal{A}_{\eta}}\rightarrow 0$ as $\varepsilon \rightarrow 0$.

Arguing in the spirit of the proof of Lemma~4.2 \cite{LU88},
we 
define 
 $v_1(x)$ and $v_2(x)$ as  solutions of the following problems:
$$
\left\lbrace \begin{aligned}
\mathcal{L}_{\varepsilon}v_1&=b^i_{\varepsilon}D_iW \ \, \text{in}\ \mathcal{A}_{\eta}\\
v_1&=v \quad \text{on}\ \partial\mathcal{A}_{\eta}
\end{aligned} \right., \qquad 
\left\lbrace \begin{aligned}
\mathcal{L}_{\varepsilon}v_2&=b^i_{\varepsilon}D_iW -b^n_{\varepsilon}m^+ \  \, \text{in}\ \mathcal{A}_{\eta}\\
v_2&=0 \quad \text{on}\ \partial\mathcal{A}_{\eta}
\end{aligned} \right..
$$

It is well known (see, for instance, \cite[Chapter 6]{Kr08}) that $D(Dv_1)$ and $D(Dv_2)$ belong to the space $BMO_{loc}\left( \mathcal{A}_{\eta}\right) $. Moreover, the John-Nirenberg theorem \cite{JN61}  (see also \cite[\S 4, Chapter 6]{D01}) implies that $D(Dv_i)$, $i=1,2$, belong to the Orlicz space $L_{\Phi,loc}(\mathcal{A}_{\eta})$ with $\Phi(\varsigma)=e^{\varsigma}-\varsigma-1$. So, taking into account the property (iii) we may conclude that $v_i \in \mathcal{W}^2_{\mathcal{X}, loc}\left( \mathcal{A}_{\eta}\right) $, $i=1,2$. \vspace{0.2cm}


Furthermore, in view of (\ref{better-estimate-for-tilde-z})  and the direct calculation, we have 
the inequalities
\begin{align*}
\mathcal{L}_{\varepsilon}W & \leqslant b^i_{\varepsilon}D_iW
\quad \text{in} \quad \mathcal{A}_{\eta},\\
W(x)&=\mu\widetilde{k} \leqslant v(x)=v_1(x) \quad \text{on the sphere}\quad |x-\widetilde{z}|=\frac{\rho_0}{8},\\
W(x)&=0\ \leqslant v(x)=v_1(x) \quad \ \text{on}\quad \partial \mathcal{A}_{\eta} \cap \left\lbrace x\in \mathbb{R}^n: |x-\widetilde{z}|=\widetilde{z}_n\right\rbrace .
\end{align*}
On the rest of $\partial \mathcal{A}_{\eta}$ we have $x_n=F(x')+\eta$ and, consequently, $\textit{dist}\left\lbrace x, \partial\Omega\right\rbrace\leqslant~\eta$. Since $u\in \mathcal{C}\left( \overline{\Omega}\right)$, the latter inequality  implies the estimate 
$u \leqslant H(\eta)$ there, and therefore, 
$$
v_1(x)=v(x)=m^+x_n-u\geqslant \dfrac{\omega }{2} x_n-H(\eta),
$$
where $H$ is a nonnegative function tending to zero as $\eta\rightarrow 0$.

In addition, it is easy to verify that
$$
W(x) \leqslant \mu N_6(n,\nu)\widetilde{C}_1 \omega \delta (r)x_n\quad \text{in}\  \overline{\mathcal{B}}(\widetilde{z}, \rho_0/8, \widetilde{z}_n).
$$
Choosing $\mu=\min \left\lbrace 1; \left( 2N_6\widetilde{C}_1\right)^{-1} \right\rbrace $,
we get
$$
v_1(x) \geqslant W(x)-H(\eta) \quad \text{on}\quad \partial\mathcal{A}_{\eta}.
$$
The maximum principle (\ref{max-principle}) applied to the difference $W-H(\eta)-v_1$ in $\mathcal{A}_{\eta}$ provides the inequality
$$
v_1(x) \geqslant W(x)-H(\eta) \geqslant \mu N_7(n, \nu) \widetilde{C}_1 \omega \delta (r) \left( \widetilde{z}_n-|x-\widetilde{z}|\right)- H(\eta). 
$$
It follows from the last inequality with
$x=(\widetilde{z}',x_n)\in \Omega$ and $0<x_n\leqslant \widetilde{z}_n-\rho_0/8= r/4$  that
\begin{equation} \label{v1-1}
v_1(\widetilde{z}',x_n) \geqslant N_8(n, \nu)\, \omega\, \delta (r) x_n-H(\eta).
\end{equation}


Next, we look for a majorant for $v_2$. With this aim in view, we extend the coefficients $a^{ij}_{\varepsilon}$ continuously and  and the coefficients $b^i_{\varepsilon}$ by zero to the whole annulus $\mathcal{B}(\widetilde{z}, \rho_0/8, \widetilde{z}_n)$, and denote by $\widetilde{v}_2(x)$ the solution of the problem
\begin{align*}
\mathcal{L}_{\varepsilon}\widetilde{v}_2&=\left\lbrace \begin{aligned}
\left( \mathcal{L}_{\varepsilon}v_2\right)_+ \quad &\text{in}\quad \mathcal{A}_{\eta},\\
 0 \qquad \quad &\text{in}\quad  \mathcal{B}(\widetilde{z}, \rho_0/8, \widetilde{z}_n) \setminus \mathcal{A}_{\eta};
\end{aligned} \right. \\
\widetilde{v}_2&=0 \quad \text{on} \quad \partial \mathcal{B}(\widetilde{z}, \rho_0/8, \widetilde{z}_n).
\end{align*}
The maximum principle guarantees  
\begin{equation} \label{v2-1}
v_2 \leqslant \widetilde{v}_2  \quad \text{in} \quad \mathcal{A}_{\eta}.
\end{equation}

Direct computations show that for $\rho \leqslant r/4$ the barrier function $W$ satisfies in the set $\mathcal{E}_{\rho}:=\mathcal{P}_{\rho}(\widetilde{z}',0) \cap \mathcal{B}(\widetilde{z}, \rho_0/8, \widetilde{z}_n)$ 
the following inequalities 
\begin{align*}
|D_nW|\leqslant |DW| & \leqslant N_9(n, \nu)\,\mu\, \frac{\widetilde{k}}{r}\leqslant N_9\,\omega\,\delta (r), \\
|D'W| &\leqslant N_9 \mu \frac{\widetilde{k}\rho}{r^2} \leqslant N_9\,\omega\,\frac{\delta (r)\rho}{r}.
\end{align*}
So, in view of (\ref{app-estimate-b}) and (\ref{delta-to-zero}), we have for all $\rho \leqslant r/4$ 
the bounds
\begin{align*}
\|\left( \mathcal{L}_{\varepsilon}\widetilde{v}_2\right)_{+} \|_{\mathcal{X}, \mathcal{E}_{\rho}} &\leqslant 
\|b^n\|_{\mathcal{X}, \mathcal{E}_{\rho}}\bigg( m^++\|D_nW\|_{\infty, \mathcal{E}_{\rho}}\bigg) +\|\mathbf{b}'\|_{\mathcal{X}, \mathcal{E}_{\rho}}\|D'W\|_{\infty, \mathcal{E}_{\rho}}\\
&\leqslant N_{10} (n, \nu)\,\omega \, \left[\mathfrak{B} \sigma \left( \rho/\mathcal{R}_0\right)+\frac{\delta (r)}{r} \rho \|\mathbf{b}'\|_{\mathcal{X}, \mathcal{A}_{\eta}}  \right].
\end{align*}
Since the function $\rho \mapsto \left[\mathfrak{B} \sigma \left( \rho/\mathcal{R}_0\right)+\frac{\delta (r)}{r} \rho \|\mathbf{b}'\|_{\mathcal{X}, \mathcal{A}_{\eta}}  \right]$ satisfies the Dini condition at zero, there exist the uniquely defined function $\sigma_1 \in \mathcal{D}_1$ and a constant $\mathfrak{B}_1$ such that
$$
\mathfrak{B} \sigma \left( \rho/\mathcal{R}_0\right)+\frac{\delta (r)}{r} \rho \|\mathbf{b}'\|_{\mathcal{X}, \mathcal{A}_{\eta}}  = \mathfrak{B}_1 \sigma_1\left( 4\rho /r\right). 
$$

Thus, we may
apply Lemma~\ref{lemma2.3-LU88} to the function $\widetilde{v}_2$. It gives for $ \rho =r/4$
 the estimate
\begin{equation} \label{v2-2}
\begin{aligned}
\sup\limits_{0<x_n<r/4}\frac{\widetilde{v}_2(\widetilde{z}',x_n)}{x_n}
&\leqslant C_4 \left( \left( r/4\right) ^{-1}\sup\limits_{\mathcal{E}_{r/4}}\widetilde{v}_2+N_{10}\omega \mathfrak{B}_1\mathcal{J}_{\sigma_1}\left( 1\right) \right). 
\end{aligned}
\end{equation}
It is easy to see that
$$
\mathfrak{B}_1\mathcal{J}_{\sigma_1}(1)= \mathfrak{B}\mathcal{J}_{\sigma}\left( \frac{r}{4 \mathcal{R}_0}\right) +
\frac{\delta (r)}{4}\|\mathbf{b}'\|_{\mathcal{X}, \mathcal{A}_{\eta}}. 
$$
Furthermore,
applying (\ref{max-principle}) to $\widetilde{v}_2$ and to the operator $\mathcal{L}_{\varepsilon}$ in $\mathcal{B}(\widetilde{z}, \rho_0/8, \widetilde{z}_n)$, we obtain
$$
\sup\limits_{\mathcal{E}_{r/4}}\widetilde{v}_2 \leqslant \sup\limits_{\mathcal{B}(\widetilde{z}, \rho_0/8, \widetilde{z}_n)} \widetilde{v}_2 \leqslant N_{11}(n, \nu, \|\mathbf{b}\|_{\mathcal{X}, \Omega})\,\omega
 r \left[ \mathfrak{B}\sigma \left( \frac{r}{\mathcal{R}_0}\right) +\delta (r) \|\mathbf{b}'\|_{\mathcal{X}, \mathcal{A}_{\eta}}\right]. 
$$
Substitution of the above estimates in (\ref{v2-2}) and having regard to (\ref{relation-1})  provide
\begin{equation} \label{v2-3}
\sup\limits_{0<x_n<r/4}\frac{\widetilde{v}_2(\widetilde{z}',x_n)}{x_n}\leqslant N_{12} \,\omega \left[ 
\mathfrak{B}\mathcal{J}_{\sigma}\left( \frac{r}{\mathcal{R}_0}\right) +\delta (r) \|\mathbf{b}'\|_{\mathcal{X}, \mathcal{A}_{\eta}}\right],
\end{equation}
where the constant $N_{12}$ depends only on $n$, $\nu$ and $\|\mathbf{b}\|_{\mathcal{X}, \Omega}$. \vspace{0.1cm}

Taking into account  the inequality (\ref{relation-J-sigma}), the assumption (\ref{relation-2}), and  the evident relation $\|\mathbf{b}'\|_{\mathcal{X}, \mathcal{A}}=o(1)$ as $r\rightarrow 0$, we decrease $R_0$ such that the property
\begin{equation} \label{relation-3}
\left[  
\mathfrak{B}\mathcal{J}_{\sigma}\left( \frac{r}{\mathcal{R}_0}\right) +\delta (r) \|\mathbf{b}'\|_{\mathcal{X}, \mathcal{A}_{\eta}}\right] \leqslant \frac{N_8}{2N_{12}}\delta (r)
\end{equation}
holds true for all $r \leqslant R_0$.

Finally, combining (\ref{v1-1})-(\ref{v2-1}) with (\ref{v2-3})-(\ref{relation-3}) 
we arrive   at the estimate
\begin{equation} \label{v-final}
v_1(\widetilde{z}', x_n)-v_2(\widetilde{z}', x_n) \geqslant \frac{N_8}{2}\omega \delta (r)x_n -H(\eta)
\end{equation}
for $r \leqslant R_0$ and $x=(\widetilde{z}',x_n) \in \Omega$ with $x_n \in [F(\widetilde{z}')+\eta, r/4]$.

Considering in $\mathcal{A}_{\eta}$ the function $v_3(x)=v(x)-v_1(x)+v_2(x)$  one can easily see that 
$$
\mathcal{L}_{\varepsilon}v_3=-\mathcal{L}_{\varepsilon}u \rightarrow 0 \quad \text{in}\quad \mathcal{X}\left( \mathcal{A}_{\eta}\right) \quad \text{as}\quad \varepsilon \rightarrow 0.
$$
In addition, $v_3=0$ on $\partial\mathcal{A}_{\eta}$. Applying the maximum principle (\ref{max-principle}) to $\pm v_3$ and to the operator $\mathcal{L}_{\varepsilon}$ we obtain that the difference $v_1(x)-v_2(x)$ converges to $v(x)$ uniformly in $\mathcal{A}_{\eta}$. Therefore,
passing in (\ref{v-final}) first to the limit as $\varepsilon \to 0$ and then as $\eta \rightarrow 0$, we get 
\begin{equation} \label{v-2final}
\frac{v(x)}{x_n} \geqslant \frac{N_8}{2}\omega \delta (r).
\end{equation}
for $r \leqslant R_0$ and $x=(\widetilde{z}',x_n) \in \Omega$ with $x_n \in [F(\widetilde{z}'), r/4]$.


Since $\widetilde{z}'$ can be chosen arbitrarily with only $|\widetilde{z}'|\leqslant \dfrac{r}{4}$, the estimate (\ref{v-2final}) gives (\ref{1}) with $\varkappa= N_8/2 $. 
\end{proof}

\vspace{0.2cm}
\begin{thm}[\textbf{Main Theorem}]
Let the assumptions of Theorem~\ref{osc-estimate} hold, and let $\delta (r)=\max\limits_{|x'|\leqslant r}\frac{F(x')}{|x'|}$ do not satisfy the Dini condition at zero.

Then for any function $u$ satisfying (\ref{main-assumptions-on-u}) the equality
$$
\frac{\partial u}{\partial \mathbf{n}}(0)=0
$$
holds true.
\end{thm}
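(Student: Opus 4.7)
The plan is to iterate the oscillation decay estimate (\ref{1}) of Theorem~\ref{osc-estimate} on a geometric sequence of scales, convert the resulting product of decay factors into a series via the standard bound $\log(1-t)\leqslant -t$, and use the failure of the Dini condition on $\delta$ to show that this series diverges. This forces $\osc{\Omega\cap \mathcal{P}_r}(u/x_n)\to 0$ as $r\to 0$, so $u(x)/x_n$ has a unique limit $L$ at the origin within $\Omega$. A final continuity argument at a boundary point with $x_n>0$ identifies $L$ with $0$, which is precisely the vanishing of the normal derivative.

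Concretely, fix $r_0\in(0,R_0/2)$ small and set $r_k=r_0/8^k$, so that $r_k/4=2r_{k+1}$. Writing
$$
\omega_k:=\osc{\Omega\cap \mathcal{P}_{2r_k}}\frac{u(x)}{x_n},
$$
estimate (\ref{1}) at scale $r=r_k$ yields $\omega_{k+1}\leqslant(1-\varkappa\delta(r_k))\,\omega_k$ and hence
$$
\omega_N\leqslant \omega_0\prod_{k=0}^{N-1}\bigl(1-\varkappa\delta(r_k)\bigr).
$$
Since $\delta$ is non-decreasing, the elementary bound $\int_{r_{k+1}}^{r_k}\delta(r)/r\,dr\leqslant \delta(r_k)\log 8$ gives
$$
\sum_{k=0}^{\infty}\delta(r_k)\geqslant \frac{1}{\log 8}\int_{0}^{r_0}\frac{\delta(r)}{r}\,dr=+\infty
$$
by the assumed failure of the Dini condition for $\delta$. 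Choosing $r_0$ so small that $\varkappa\delta(r_0)\leqslant 1/2$ (so $\log(1-\varkappa\delta(r_k))\leqslant -\varkappa\delta(r_k)$ for all $k$), we conclude that the product above tends to $0$; by monotonicity of oscillation in the set,
$$
\lim_{r\to 0}\,\osc{\Omega\cap \mathcal{P}_{r}}\frac{u(x)}{x_n}=0.
$$

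In particular, $u(x)/x_n$ possesses a unique limit $L$ as $x\to 0$ in $\Omega\cap\{x_n>0\}$. Since $F$ is convex with $F(0)=0$ and $F\geqslant 0$, the hyperplane $\{x_n=0\}$ supports $\overline{\Omega}$ at the origin and $\mathbf{n}$ is aligned with the $x_n$-axis, so $L$ coincides with $\partial u/\partial\mathbf{n}(0)$. To show $L=0$, note that the non-Dini hypothesis forces $\delta(r)>0$ for all sufficiently small $r$ (otherwise $\int_{0}^{r_0}\delta(r)/r\,dr$ would be finite). This provides boundary points $z^{(m)}\in\partial\Omega\cap \overline{\mathcal{P}}_{\mathcal{R}_0}$ with $z^{(m)}\to 0$ and $z^{(m)}_n>0$. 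Choosing $y^{(m)}\in\Omega$ tending to $z^{(m)}$ with $y^{(m)}_n\to z^{(m)}_n>0$, the continuity of $u$ on $\overline{\Omega}$ together with $u(z^{(m)})=0$ gives $u(y^{(m)})/y^{(m)}_n\to 0$; since this quotient also has the global limit $L$, we conclude $L=0$.

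The real work has already been done inside Theorem~\ref{osc-estimate}; the present deduction is routine, the only subtlety being the geometric observation at the end, namely that the non-Dini hypothesis on $\delta$ automatically produces enough boundary points with $x_n>0$ arbitrarily close to the origin to pin down $L=0$ via continuity.
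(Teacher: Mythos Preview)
Your argument follows the same route as the paper's: iterate estimate~(\ref{1}) on the geometric scales $r_k=r_0/8^k$, turn the product of factors $(1-\varkappa\delta(r_k))$ into a divergent series via the non-Dini hypothesis, and conclude that the oscillation of $u/x_n$ tends to zero. There is, however, one genuine omission and one imprecision.

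\textbf{The gap.} You never verify that the starting oscillation $\omega_0=\osc{\Omega\cap\mathcal{P}_{2r_0}}u/x_n$ is finite. If $\omega_0=\infty$ your chain of inequalities $\omega_N\leqslant\omega_0\prod_k(1-\varkappa\delta(r_k))$ says nothing, since the right-hand side is $\infty\cdot 0$. The continuity of $u$ on $\overline{\Omega}$ alone does not prevent $u/x_n$ from blowing up as $x_n\to 0$. The paper closes this gap explicitly by invoking Lemma~\ref{lemma2.3-LU88} (applied to $u$ and to $-u$, at each base point $x'$ with $|x'|\leqslant r_0$; note $\mathcal{L}u=0$ so $\mathfrak{F}=0$), which yields a bound of the form $\sup_{\mathcal{P}_{r_0}\cap\Omega}|u|/x_n\leqslant C_4\,r_0^{-1}\sup|u|<\infty$. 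You should cite this.

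\textbf{The imprecision.} In your last paragraph, the sentence ``choosing $y^{(m)}\in\Omega$ tending to $z^{(m)}$ with $y^{(m)}_n\to z^{(m)}_n>0$ \dots\ gives $u(y^{(m)})/y^{(m)}_n\to 0$'' hides a diagonalization: since $z^{(m)}_n\to 0$, merely requiring $y^{(m)}-z^{(m)}\to 0$ does not force $u(y^{(m)})=o(y^{(m)}_n)$. You must, for each fixed $m$, choose $y^{(m)}$ close enough to $z^{(m)}$ that $|u(y^{(m)})|<z^{(m)}_n/m$ and $y^{(m)}_n>z^{(m)}_n/2$; then the quotient is below $2/m$. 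The paper sidesteps this by observing (already inside the proof of Theorem~\ref{osc-estimate}) that the existence of boundary points with $x_n>0$ in every $\mathcal{P}_r$ forces $\inf_{\Omega\cap\mathcal{P}_r}u/x_n\leqslant 0\leqslant\sup_{\Omega\cap\mathcal{P}_r}u/x_n$, whence directly $|u(0,x_n)/x_n|\leqslant\osc{\Omega\cap\mathcal{P}_r}u/x_n$ for $x_n<r$.
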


\begin{proof}
Consider the sequence $r_k=8^{-k}R_0$, $k\geqslant 0$, where $R_0$ is the constant from Theorem~\ref{osc-estimate}.

Application of
Theorem~\ref{osc-estimate}  to $u$ guarantees for $k \geqslant 0$ the following inequalities
$$
\osc{\Omega\cap \mathcal{P}_{r_{k+1}}}\frac {u(x)}{x_n}\leqslant \left( 1-\varkappa \delta (r_k/2)\right) \osc{\Omega\cap \mathcal{P}_{r_k}}\frac {u(x)}{x_n} \leqslant \osc{\Omega\cap \mathcal{P}_{R_0}}\frac {u(x)}{x_n} \cdot
\prod\limits_{j=0}^k \left( 1-\varkappa \delta (r_j/2)\right).
$$
Since 
$$
\sum\limits_{j=0}^{\infty}\ln \left( 1-\varkappa \delta  (r_j/2)\right) \asymp -\sum\limits_{j=0}^{\infty}\delta (r_j/2) \asymp -\int\limits_{0}^{r_0}\frac{\delta (r)}{r}dr =-\infty,
$$
we have
$$
\prod\limits_{j=0}^{k} \left( 1-\varkappa \delta (r_j/2)\right)\rightarrow 0 \quad \text{as}\quad k\rightarrow \infty.
$$
We recall also that Lemma~\ref{lemma2.3-LU88} implies the finiteness of the quantity $\osc{\Omega\cap \mathcal{P}_{R_0}}\dfrac {u(x)}{x_n}$. 

Thus, taking into account that $u\big|_{\partial\Omega \cap \mathcal{P}_{\mathcal{R}_0}}=0$ we get

$$
\left|\frac{\partial u}{\partial \mathbf{n}}(0)\right|=\left|\lim\limits_{x_n\rightarrow 0}\frac{u(0,x_n)}{x_n}\right|\leqslant \lim\limits_{k \to \infty}\left| \osc{\Omega\cap \mathcal{P}_{r_k}}\dfrac {u(x)}{x_n}\right|=0,
$$
and complete the proof.
\end{proof}

\section*{Acknowledgement}

The authors would like to thank the anonymous referee for essential comments.

This work was supported by the Russian Foundation of Basic Research (RFBR) through the grant 
15-01-07650  and by the St. Petersburg State University grant 6.38.670.2013.






\bibliography{Bibliography(dcd)}

\Addresses

\end{document}